\documentclass[12pt]{amsart}
\usepackage{amsmath,amssymb,amsbsy,amsfonts,latexsym,amsopn,amstext,
                                               amsxtra,euscript,amscd,bm}
\usepackage{comment}
\usepackage{graphicx}

\usepackage{mathtools}
\usepackage{units}
\usepackage{float}
\restylefloat{table}
\usepackage[english]{babel}                    
\usepackage{url}
\usepackage[colorlinks,linkcolor=blue,anchorcolor=blue,citecolor=blue]{hyperref}
\usepackage{color}
\usepackage[utf8]{inputenc}

\usepackage{mathtools}

\usepackage{listings}
\usepackage{xcolor}

\usepackage{blindtext}

\usepackage[multiple]{footmisc}
\usepackage{bigfoot}

\DeclareNewFootnote{AAffil}[arabic]
\definecolor{codegreen}{rgb}{0,0.6,0}
\definecolor{codegray}{rgb}{0.5,0.5,0.5}
\definecolor{codepurple}{rgb}{0.58,0,0.82}
\definecolor{backcolour}{rgb}{0.95,0.95,0.92}

\lstdefinestyle{mystyle}{
    backgroundcolor=\color{backcolour},   
    commentstyle=\color{codegreen},
    keywordstyle=\color{magenta},
    numberstyle=\tiny\color{codegray},
    stringstyle=\color{codepurple},
    basicstyle=\ttfamily\footnotesize,
    breakatwhitespace=false,         
    breaklines=true,                 
    captionpos=b,                    
    keepspaces=true,                 
    numbers=left,                    
    numbersep=5pt,                  
    showspaces=false,                
    showstringspaces=false,
    showtabs=false,                  
    tabsize=2
}

\lstset{style=mystyle}

\pagestyle{plain}

\begin{document}

\newtheorem*{note}{Note}
\newtheorem{problem}{Problem}
\newtheorem{theorem}{Theorem}
\newtheorem{lemma}[theorem]{Lemma}
\newtheorem{claim}[theorem]{Claim}
\newtheorem{corollary}[theorem]{Corollary}
\newtheorem{prop}[theorem]{Proposition}
\newtheorem{definition}{Definition}
\newtheorem{question}[theorem]{Question}
\newtheorem{conjecture}{Conjecture}
\def\cA{{\mathcal A}}
\def\cB{{\mathcal B}}
\def\cC{{\mathcal C}}
\def\cD{{\mathcal D}}
\def\cE{{\mathcal E}}
\def\cF{{\mathcal F}}
\def\cG{{\mathcal G}}
\def\cH{{\mathcal H}}
\def\cI{{\mathcal I}}
\def\cJ{{\mathcal J}}
\def\cK{{\mathcal K}}
\def\cL{{\mathcal L}}
\def\cM{{\mathcal M}}
\def\cN{{\mathcal N}}
\def\cO{{\mathcal O}}
\def\cP{{\mathcal P}}
\def\cQ{{\mathcal Q}}
\def\cR{{\mathcal R}}
\def\cS{{\mathcal S}}
\def\cT{{\mathcal T}}
\def\cU{{\mathcal U}}
\def\cV{{\mathcal V}}
\def\cW{{\mathcal W}}
\def\cX{{\mathcal X}}
\def\cY{{\mathcal Y}}
\def\cZ{{\mathcal Z}}

\def\A{{\mathbb A}}
\def\B{{\mathbb B}}
\def\C{{\mathbb C}}
\def\D{{\mathbb D}}
\def\E{{\mathbb E}}
\def\F{{\mathbb F}}
\def\G{{\mathbb G}}
\def\I{{\mathbb I}}
\def\J{{\mathbb J}}
\def\K{{\mathbb K}}
\def\L{{\mathbb L}}
\def\M{{\mathbb M}}
\def\N{{\mathbb N}}
\def\O{{\mathbb O}}
\def\P{{\mathbb P}}
\def\Q{{\mathbb Q}}
\def\R{{\mathbb R}}
\def\S{{\mathbb S}}
\def\T{{\mathbb T}}
\def\U{{\mathbb U}}
\def\V{{\mathbb V}}
\def\W{{\mathbb W}}
\def\X{{\mathbb X}}
\def\Y{{\mathbb Y}}
\def\Z{{\mathbb Z}}

\def\ep{{\mathbf{e}}_p}
\def\em{{\mathbf{e}}_m}
\def\eq{{\mathbf{e}}_q}

\def\scr{\scriptstyle}
\def\\{\cr}
\def\({\left(}
\def\){\right)}
\def\[{\left[}
\def\]{\right]}
\def\<{\langle}
\def\>{\rangle}
\def\fl#1{\left\lfloor#1\right\rfloor}
\def\rf#1{\left\lceil#1\right\rceil}
\def\le{\leqslant}
\def\ge{\geqslant}
\def\eps{\varepsilon}
\def\mand{\qquad\mbox{and}\qquad}

\def\sssum{\mathop{\sum\ \sum\ \sum}}
\def\ssum{\mathop{\sum\, \sum}}
\def\ssumw{\mathop{\sum\qquad \sum}}

\def\vec#1{\mathbf{#1}}
\def\inv#1{\overline{#1}}
\def\num#1{\mathrm{num}(#1)}
\def\dist{\mathrm{dist}}

\def\fA{{\mathfrak A}}
\def\fB{{\mathfrak B}}
\def\fC{{\mathfrak C}}
\def\fU{{\mathfrak U}}
\def\fV{{\mathfrak V}}

\newcommand{\bflambda}{{\boldsymbol{\lambda}}}
\newcommand{\bfxi}{{\boldsymbol{\xi}}}
\newcommand{\bfrho}{{\boldsymbol{\rho}}}
\newcommand{\bfnu}{{\boldsymbol{\nu}}}

\def\GL{\mathrm{GL}}
\def\SL{\mathrm{SL}}

\def\Hba{\overline{\cH}_{a,m}}
\def\Hta{\widetilde{\cH}_{a,m}}
\def\Hb1{\overline{\cH}_{m}}
\def\Ht1{\widetilde{\cH}_{m}}

\def\flp#1{{\left\langle#1\right\rangle}_p}
\def\flm#1{{\left\langle#1\right\rangle}_m}
\def\dmod#1#2{\left\|#1\right\|_{#2}}
\def\dmodq#1{\left\|#1\right\|_q}

\def\Zm{\Z/m\Z}

\def\Err{{\mathbf{E}}}

\newcommand{\comm}[1]{\marginpar{%
\vskip-\baselineskip 
\raggedright\footnotesize
\itshape\hrule\smallskip#1\par\smallskip\hrule}}

\def\xxx{\vskip5pt\hrule\vskip5pt}

\newenvironment{nouppercase}{%
  \let\uppercase\relax%
  \renewcommand{\uppercasenonmath}[1]{}}{}
  

\title{A note on medium and short character sums}
\author{Matteo Bordignon }
\address{KTH Royal Institute of Technology, Stockholm \newline and \newline Charles University, Faculty of Mathematics and Physics, Department of Algebra, Sokolovská 83, 186 00 Praha 8, Czech Republic Department of Mathematics}
\email{matteobordignon91@gmail.com}
\date{\today
}


\begin{abstract}
Following the work of Hildebrand we improve the P\'{o}lya-Vinogradov inequality in a specific range, we also give a general result that shows its dependency on Burgess bound and at last we improve the range of validity for a special case of Burgess' character sum estimate.
\end{abstract}
\begin{nouppercase}
\maketitle
\end{nouppercase}
It is of high interest studying the possible upper bounds of the following quantity
\begin{equation}
\label{eq:S}
S(N, \chi):=\left|\sum_{n=1}^N \chi(n) \right|,
\end{equation}
with $N\in \mathbb{N}$ and $\chi$ a non-principal Dirichlet character modulo $q$. The famous P\'{o}lya--Vinogradov inequality tells us that for any $\chi$ non-principal 
\begin{equation*}
S(N, \chi)\ll \sqrt{q}\log q,
\end{equation*}
and aside for the implied constant, this is the best known result. Paley in \cite{Paley} proved that for infinitely many characters we have
\begin{equation*}
\max_{N} \left|\sum_{n=1}^N\chi(n)\right|\gg \sqrt{q} \log \log q.
\end{equation*}
On the other hand Montgomery and Vaughan \cite{MV} showed, assuming the Generalized Riemann Hypothesis (GRH), we have that 
\begin{equation*}
\left|\sum_{n=1}^N\chi(n)\right|\ll \sqrt{q} \log \log q.
\end{equation*} 
The best known asymptotic constant in \eqref{eq:S} for primitive characters is $\frac{69}{70\pi 3 \sqrt{3}}+o(1)$, if $\chi$ is even from \cite{Granville} and $\frac{1}{ 3 \pi}+o(1)$ if $\chi$ is odd from \cite{Hild1}. For the best completely explicit constant see \cite{Bordignon}, \cite{B-K} and \cite{F-S}. For primitive characters of odd order $g$ Granville and Soundararajan improved the P\'{o}lya--Vinogradov inequality, proving the following bound
$$ \left|\sum_{n=1}^N\chi(n)\right|\ll \sqrt{q}(\log q)^{1-\delta_g/2+o(1)},$$
with $\delta=1-\frac{g}{\pi}\sin \frac{\pi}{g}$. Another interesting result is Theorem 1.1 in \cite{F} by E. Fouvry, E. Kowalski, P. Michel, C. S. Raju, J. Rivat, and K. Soundararajan, where they extend the so called P\'{o}lya--Vinogradov range.
It is interesting to note that it appears that $S(N, \chi)$ assumes its maximum for $N\approx q$, see the work by Bober et al. in \cite{Bober} and the one by Hildebrand, Corollary 3 of \cite{Hild1}, which proves that for even characters we have that $N=o(q)$ implies $S(N,\chi) =o(\sqrt{q}\log q)$. \newline
In this paper we will first give an improved version of the P\'{o}lya--Vinogradov inequality for a limited range, drawing inspiration from the work of Hildebrand in \cite{Hild_PV} and \cite{Hild1}. The interesting aspect of this result is that it does not use Burgess bound as it only relies on Montgomery and Vaughan \cite[Corollary 1]{MV} and that it allows to prove the best possible P\'{o}lya--Vinogradov inequality in a certain range. Specifically, we prove the following result that follows from Theorem \ref{theo:PVHild} taking $a(q)=(\log q)^{c}$, with $c>4+2\epsilon$, and $R(q)=(\log q)^{2+\epsilon}$ and going from primitive to non principal characters as done by Hildebrand in \cite{Hild_PV}.
\begin{theorem}
\label{theo:PVb}
Take $N$ and $q$ such that
\begin{equation*}
\frac{q}{(\log q)^{c+2}}<N<\frac{q}{2(\log q)^{2}},
\end{equation*}
for any fixed $c>0$.
Then, for $\chi$ an even non-principal character mod $q$, we have
\begin{align*}
\sum_{n=1}^N \chi(n) \ll_c \sqrt{q} \log \log q.
\end{align*}
\end{theorem}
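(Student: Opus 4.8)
The plan is to obtain Theorem~\ref{theo:PVb} from the general Theorem~\ref{theo:PVHild}, treating first the even \emph{primitive} characters and then descending to the even non-principal ones as in Hildebrand \cite{Hild_PV}. For the primitive case I would simply feed Theorem~\ref{theo:PVHild} the parameters $a(q)=(\log q)^{c}$ and $R(q)=(\log q)^{2+\epsilon}$. The motivation for the first choice is transparent: the lower endpoint of the window factors as $q/(\log q)^{c+2}=q/\bigl(a(q)(\log q)^{2}\bigr)$, so with this $a(q)$ the range in the statement is exactly the range in which Theorem~\ref{theo:PVHild} is applicable. Substituting both parameters into the conclusion of that theorem and simplifying, one finds that the hypothesis $c>4+2\epsilon$ is precisely what is needed to force every contribution other than $\sqrt q\,\log\log q$ to be dominated, so that the bound collapses to $\sqrt q\,\log\log q$ for primitive $\chi$. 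The cases $0<c\le 4+2\epsilon$ require no extra work: for such $c$ the window $(q/(\log q)^{c+2},\,q/(2(\log q)^{2}))$ is contained in the window obtained by taking any admissible larger exponent, so the conclusion follows a fortiori from the case already handled.

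Next I would pass from primitive to non-principal characters. Let the even non-principal $\chi\bmod q$ be induced by the primitive $\chi^{*}\bmod d$ with $d\mid q$, $d>1$, and set $q^{\flat}=\prod_{p\mid q,\,p\nmid d}p$, so that $\gcd(q^{\flat},d)=1$ and $q^{\flat}d\mid q$. Since $\chi(n)=\chi^{*}(n)\mathbf{1}_{(n,q^{\flat})=1}$, Möbius inversion gives
\begin{equation*}
\sum_{n\le N}\chi(n)=\sum_{e\mid q^{\flat}}\mu(e)\chi^{*}(e)\sum_{m\le N/e}\chi^{*}(m),
\end{equation*}
whence $S(N,\chi)\le 2^{\omega(q^{\flat})}\max_{e\mid q^{\flat}}S(N/e,\chi^{*})$. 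Applying the primitive case to each inner sum produces a bound of shape $\sqrt d\,\log\log d$, and then the elementary estimate $2^{\omega(q^{\flat})}\ll\sqrt{q^{\flat}}$ together with $q^{\flat}d\le q$ yields $2^{\omega(q^{\flat})}\sqrt d\ll\sqrt q$, so that $S(N,\chi)\ll\sqrt q\,\log\log q$. This is the descent used by Hildebrand in \cite{Hild_PV}; the divisor-function loss is harmless because it saves a square root against the complementary factor $q/d$.

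The hard part will be the interface between these two steps, namely the range matching inside the descent. The inner sums have length $N/e$ but are attached to the smaller modulus $d$, so before invoking the primitive case for $\chi^{*}$ one must know that $N/e$ still lies in the window admissible for $d$ rather than for $q$; handling this uniformly over all $e\mid q^{\flat}$ — and estimating trivially those $e$ for which $N/e$ leaves the window — is the delicate bookkeeping, and it is exactly here that the freedom in the choice of $a(q)$, $R(q)$ and the room to enlarge $c$ are spent. The genuinely analytic input is entirely contained in Theorem~\ref{theo:PVHild} (the Pólya–Fourier expansion for even characters combined with \cite[Corollary~1]{MV}); once that engine is granted, the proof of Theorem~\ref{theo:PVb} is the parameter substitution above together with this descent.
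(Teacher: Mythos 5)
You follow the same route as the paper: the paper's entire proof of Theorem~\ref{theo:PVb} is the substitution $a(q)=(\log q)^{c}$, $R(q)=(\log q)^{2+\epsilon}$ into Theorem~\ref{theo:PVHild}, followed by the primitive-to-non-principal descent of \cite{Hild_PV}. However, the one computation you actually carry out---the range matching---is wrong. The admissible window in Theorem~\ref{theo:PVHild} is
\begin{equation*}
\frac{qR(q)}{a(q)}<N<\frac{q}{R(q)}\left(1-\frac{R(q)}{a(q)}\right),
\end{equation*}
so with $a(q)=(\log q)^{c}$ and $R(q)=(\log q)^{2+\epsilon}$ its lower endpoint is $q(\log q)^{2+\epsilon}/(\log q)^{c}=q/(\log q)^{c-2-\epsilon}$, not $q/\bigl(a(q)(\log q)^{2}\bigr)=q/(\log q)^{c+2}$: you have in effect replaced $qR/a$ by $q/(aR)$. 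Hence the stated window is not ``exactly'' the admissible one; it protrudes below it by a factor $(\log q)^{4+\epsilon}$. The repair is to take the exponent of $a(q)$ to be $c+4+2\epsilon$, which is what the paper's condition ``$c>4+2\epsilon$'' encodes (the $c$ in the parameter choice is not the $c$ of Theorem~\ref{theo:PVb}). Correspondingly, your reading of $c>4+2\epsilon$ as ``precisely what is needed'' to dominate the error terms is also off: for any fixed exponent in $a(q)$ the conclusion of Theorem~\ref{theo:PVHild} is already $\ll\sqrt{q}\,\log\log q$, since the first error term is at most $\log\log q$ and the second is $o(1)$ thanks to the $\epsilon$ in $R$; the condition concerns only coverage of the range. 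Note finally that neither your choice nor the paper's covers the top of the stated window: the admissible window ends near $q/(\log q)^{2+\epsilon}$, which lies below $q/(2(\log q)^{2})$. This defect is present in the paper's own statement, but your ``exact match'' claim obscures it rather than flags it.

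The second gap is the descent. Your M\"{o}bius identity, the bound $2^{\omega(q^{\flat})}\ll\sqrt{q^{\flat}}$ and the inequality $q^{\flat}d\le q$ are all correct, and this is indeed the descent the paper invokes. But you then declare the range matching---whether $N/e$ lies in the window admissible for the conductor $d$---to be ``the hard part'' and ``delicate bookkeeping'', and you do not do it. That step is the entire content of the reduction, and it does not follow from the freedom in $a$, $R$ and $c$: if $N/e$ falls below the window for $d$, the trivial bound $N/e\le d/(\log d)^{c+2}$ is far larger than $\sqrt{d}\,\log\log d$; if it falls above it (already possible with $e=1$ when $q/d$ is bounded, since then $N$ may be as large as roughly $d/(\log d)^{2}$ while the window for $d$ ends at $d/(2(\log d)^{2})$), the primitive-case bound is simply unavailable; and if $N/e\ge d$, reducing the length modulo $d$ lands you at an essentially arbitrary length in $[0,d)$, most of which lies outside the window. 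Hildebrand's descent in \cite{Hild_PV} is clean precisely because his primitive-character bound is uniform in $N$ (it controls a maximum over all $N$); for a bound valid only on a window of $N$ this step requires an actual argument. So, as it stands, your proposal reproduces the paper's sketch with the two load-bearing verifications (window containment and the descent) missing, and the one verification you do attempt is incorrect.
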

Following the work of Hildebrand in \cite{Hild_PV}, we show that the best possible Burgess bound, see \cite{G-S3}, comes slightly short from proving the best possible P\'{o}lya--Vinogradov inequality, see \cite{MV}.
Specifically, we assume the following Burgess-like result, which is the one proven in \cite{G-S3}, assuming GRH, but with a stronger upper bound.
\begin{conjecture}
\label{Burgess}
For any non-principal character $\chi$ modulo $q$ and $x$ such that $\log x / \log \log q \rightarrow \infty$ we have
\begin{equation*}
\sum_{n\le x}\chi(n)\ll_{\epsilon} \frac{x}{(\log x)^{3+\epsilon}},
\end{equation*}
for a fixed $\epsilon>0$.
\end{conjecture}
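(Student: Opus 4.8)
Because the statement is posed as a conjecture --- a strengthening of the GRH-conditional estimate of \cite{G-S3} --- the realistic plan is to push GRH together with the pretentious machinery as far as they will go and to locate precisely where the extra saving $(\log x)^{-(3+\epsilon)}$ outstrips them. After reducing to the primitive character inducing $\chi$ (the primes dividing $q$ are harmless in the stated range), I would split according to whether $x$ is polynomially large in $q$ or not, since the two natural tools have complementary strengths.

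For $x\ge q^{\eta}$ with a fixed $\eta>0$, I would use the truncated Perron formula and shift the contour to the critical line. As $L(s,\chi)$ is entire for non-principal $\chi$, GRH lets one move to $\Re s=\tfrac12$ crossing no zeros, and the Lindel\"of-on-the-line bound gives a vertical integral of size $x^{1/2}(qT)^{o(1)}$; choosing $T$ a fixed power of $\log(qx)$ yields $\sum_{n\le x}\chi(n)\ll x^{1/2}q^{o(1)}$, comfortably inside the claimed bound. The heart of the matter is the complementary range, where $x$ is subpolynomial in $q$: there the factor $q^{o(1)}$ coming from the size of $L$ on the critical line swamps the $x^{1/2}$ saving, so GRH used this way is inert and I would turn instead to a Hal\'{a}sz-type estimate. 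Writing
\begin{equation*}
M(x,T)=\min_{|t|\le T}\D(\chi,n^{it};x)^2,\qquad \D(\chi,n^{it};x)^2=\sum_{p\le x}\frac{1-\Re\left(\chi(p)p^{-it}\right)}{p},
\end{equation*}
one has $\tfrac1x\left|\sum_{n\le x}\chi(n)\right|\ll (1+M)e^{-M}+(\text{secondary term})$, so the saving $(\log x)^{-(3+\epsilon)}$ would follow from $M(x,T)\ge(3+\epsilon)\log\log x$. In the range $\log x/\log\log q\to\infty$ one has $\sum_{p\le x}\chi(p)p^{-it}/p=\log L(1+it,\chi)+O(1)$, so this is the demand that $|L(1+it,\chi)|$ be uniformly small for $|t|\le T$, a point on which GRH bears directly through the zeros near the one-line.

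The hard part, and the reason the statement must stay a conjecture, is that in this subpolynomial range the pretentious route cannot reach the exponent $3+\epsilon$. Since each summand of $\D(\chi,n^{it};x)^2$ is at most $2/p$, one always has $\D^2\le(2+o(1))\log\log x$, hence $M\le(2+o(1))\log\log x$ and a Hal\'{a}sz saving no better than $(\log x)^{-2+o(1)}$. Breaking past the exponent $2$ here would mean excluding the near-extremal pretentious characters for which $\chi(p)$ imitates $-p^{it}$ on a density-one set of primes, and these configurations are not ruled out by GRH. This is exactly the sense in which the best possible Burgess bound comes slightly short of forcing the best possible P\'{o}lya--Vinogradov inequality.
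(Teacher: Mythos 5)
This statement is a \emph{conjecture} in the paper --- it is assumed, never proven; the paper motivates it only as a strengthening (in the exponent of $\log x$) of the GRH-conditional theorem of Granville and Soundararajan \cite{G-S3}, and its whole role is to serve as a hypothesis for Theorem \ref{theo:GRHB}. Your treatment matches the paper's: you correctly decline to manufacture a proof and instead explain the obstruction, and your key observation --- that Hal\'{a}sz-type bounds can never save more than $(\log x)^{-2+o(1)}$ because $\sum_{p\le x}\frac{1-\Re\left(\chi(p)p^{-it}\right)}{p}\le (2+o(1))\log\log x$, so the exponent $3+\epsilon$ is out of reach of the pretentious machinery even under GRH --- is sound and consistent with the paper's decision to posit the statement as a conjecture rather than derive it.
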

This allows us to prove the following result, which comes $(\log \log q)^{\epsilon}$ short from the optimal result proven, assuming GRH, in \cite{MV}.
\begin{theorem}
\label{theo:GRHB}
Assuming Conjecture \ref{Burgess} and taking $\chi$ any non-principal character  modulo $q$, then we have
\begin{equation*}
\sum_{n\le x}\chi(n)\ll_{\epsilon} \sqrt{q}(\log \log q)^{1+\epsilon}.
\end{equation*}
\end{theorem}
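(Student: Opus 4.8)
The plan is to start from P\'olya's Fourier expansion and reduce the problem to estimating a weighted ``dual'' character sum, into which the hypothesis of Conjecture~\ref{Burgess} can be fed by partial summation. Since $\sum_{n\le x}\chi(n)$ is unchanged when $x$ is replaced by its residue modulo $q$ (the contribution of each complete period vanishes), I may assume $0\le x<q$; and for $x\le\sqrt q$ the trivial bound $|\sum_{n\le x}\chi(n)|\le x\le\sqrt q$ already suffices. For the remaining range I would pass, as in \cite{Hild_PV}, from the non-principal character to the primitive character inducing it, and write, for $H$ a large fixed power of $q$ and $\theta=x/q$,
\[
\sum_{n\le x}\chi(n)=\frac{\tau(\chi)}{2\pi i}\sum_{0<|n|\le H}\frac{\bar\chi(n)}{n}\bigl(1-e(-n\theta)\bigr)+O(1),
\]
so that, since $|\tau(\chi)|=\sqrt q$, everything reduces to showing that the inner sum is $\ll_\epsilon(\log\log q)^{1+\epsilon}$ uniformly in $\theta$.

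The core estimate, and the place where Conjecture~\ref{Burgess} enters, is a partial summation. Writing $A(t)=\sum_{n\le t}\bar\chi(n)$, I would use $|A(t)|\le t$ trivially together with the conjectural bound $|A(t)|\ll_\epsilon t/(\log t)^{3+\epsilon}$, valid once $\log t/\log\log q\to\infty$. Choosing the threshold $t_0=\exp\bigl((\log\log q)^{1+\epsilon}\bigr)$, the smallest scale at which the conjecture may be invoked, the untwisted piece becomes
\[
\Bigl|\sum_{n\le H}\frac{\bar\chi(n)}{n}\Bigr|\le\frac{|A(H)|}{H}+\int_1^H\frac{|A(t)|}{t^2}\,dt\ll\int_1^{t_0}\frac{dt}{t}+\int_{t_0}^H\frac{dt}{t(\log t)^{3+\epsilon}}\ll\log t_0=(\log\log q)^{1+\epsilon},
\]
the second integral being negligible precisely because the exponent $3+\epsilon>1$ forces rapid convergence. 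This already yields the theorem for the ``diagonal'' contribution, and in the even case the pure term $\sum_{0<|n|\le H}\bar\chi(n)/n$ cancels entirely. To dispose of the low frequencies in the twisted piece I would use $|1-e(-n\theta)|\le 2\pi n\|\theta\|$, which bounds the total contribution of $1\le|n|\le 1/\|\theta\|$ by $O(1)$ with no appeal to cancellation, leaving only the range $|n|>1/\|\theta\|$.

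The main obstacle is the remaining additively twisted sum $\sum_{1/\|\theta\|<|n|\le H}\bar\chi(n)e(-n\theta)/n$, equivalently the requirement that the estimate be uniform in $\theta=x/q$, i.e.\ over all $N$. Here Conjecture~\ref{Burgess} cannot be applied directly, because it controls only the untwisted initial sums $\sum_{n\le t}\chi(n)$; completing the twist via $\bar\chi(n)=\tau(\chi)^{-1}\sum_b\chi(b)e(bn/q)$ re-expresses it through shifted character sums $\sum_{x<k\le x+u}\chi(k)$, on which the conjecture gives only $\ll q/(\log q)^{3+\epsilon}$, losing the square-root saving. Overcoming this is exactly the content of Hildebrand's method in \cite{Hild_PV}, and I would run the same machinery, reducing to primitive even characters and treating the twisted frequencies through the associated short and medium character sums, now fed with the stronger hypothesis of Conjecture~\ref{Burgess}. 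It is here that the exponent $3+\epsilon$ is essential, rather than the $2+\epsilon$ that suffices for the unconditional range of Theorem~\ref{theo:PVb}: the extra power of $\log$ is consumed in absorbing the loss incurred by the twist, and what survives is a clean $(\log\log q)^{1+\epsilon}$, only $(\log\log q)^{\epsilon}$ away from the Montgomery--Vaughan bound.
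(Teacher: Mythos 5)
Your skeleton (P\'olya's expansion, a threshold at $\exp\bigl((\log\log q)^{1+\epsilon}\bigr)$ below which one argues without cancellation, partial summation above it, and the correct bookkeeping intuition that the exponent $3+\epsilon$ must absorb a $(\log x)^{2+\epsilon}$-type loss) matches the paper's, and your treatment of the untwisted piece and of the low frequencies is sound. But there is a genuine gap exactly at the point you yourself call ``the main obstacle'': the twisted sum $\sum_{1/\|\theta\|<|n|\le H}\bar\chi(n)e(-n\theta)/n$, i.e.\ the uniformity in $\theta$. You correctly observe that completing the twist through Gauss sums modulo $q$ destroys the square-root saving, but you then dispose of the difficulty by invoking ``Hildebrand's machinery'' as a black box, without stating what that machinery is or verifying that it survives when Burgess's power saving is replaced by the purely logarithmic saving of Conjecture \ref{Burgess}. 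That verification is precisely the paper's Lemma \ref{PVconj}, which is the entire substance of Section \ref{sec:2}. The mechanism is a rational-approximation dichotomy: approximate $\alpha$ by a reduced fraction $r/s$ with $1\le s\le N/R$, $R=(\log N)^{2+\epsilon}$ (Dirichlet's theorem). If $s\ge R$, the \emph{unconditional} Montgomery--Vaughan estimate for exponential sums with multiplicative coefficients (Lemma \ref{MV}, i.e.\ \cite[Corollary 1]{MV}) already gives $\ll x/\log x$, with no appeal to the conjecture. If $s<R$, partial summation removes the factor $e((\alpha-r/s)n)$ at the cost of a factor $\ll R$, and the surviving twist $e(rn/s)$, having \emph{small} denominator, is expanded by grouping terms according to $(n,s)$ into multiplicative characters $\psi$ modulo divisors $t$ of $s$; Conjecture \ref{Burgess} is then applied to the non-principal characters $\chi\psi$, and its saving $(\log x)^{3+\epsilon}$ beats the loss $R=(\log x)^{2+\epsilon}$.

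Two ingredients of this mechanism never appear in your write-up: the Dirichlet approximation dichotomy, and the Montgomery--Vaughan theorem. The second is not optional: for $\theta$ whose rational approximations all have large denominators, Conjecture \ref{Burgess} --- which speaks only of untwisted initial sums --- gives no purchase at all, so no amount of ``feeding the stronger hypothesis'' into short or medium character sums can cover that case; an unconditional input on twisted sums is required, and that is what \cite{MV} supplies. Since the step you defer to \cite{Hild_PV} is the theorem's core rather than a routine adaptation (and your description of it, ``treating the twisted frequencies through the associated short and medium character sums,'' suggests a different and non-working mechanism), the proposal as it stands does not contain a proof.
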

We then focus on short character sums.
Let $\chi$ be a non-principal character modulo a prime $p$. Estimates of the type
\begin{equation}
\label{eq:B}
\left| \sum_{n\le N}\chi(n)\right|\le \epsilon N \quad (N \ge N_0(\epsilon,p)),
\end{equation}
are of great importance in number theory. By the P\'olya--Vinogradov inequality \eqref{eq:B} holds with $N_0(\epsilon,p)=(\log p)\sqrt{p}/\epsilon$, and Burgess' character sum estimate \cite{Burgess1, Burgess2} yields \eqref{eq:B} with $N_0(\epsilon,p)=p^{1/4+\delta}$ for any $\epsilon, \delta >0$ and $p\ge p_0(\epsilon,\delta)$. Hildebrand in \cite{Hild} proved  that given $\epsilon>0$ there exist $\delta=\delta(\epsilon)>0$ and $p_0(\epsilon)\ge 2$ such that for any $p\ge p_0(\epsilon)$ \eqref{eq:B} holds with $N_0(\epsilon,p)=p^{1/4-\delta}$, with $\delta(\epsilon)=\exp(-C(\epsilon^{-2}+1))$ with $C$ a sufficiently large constant. Using the result by Elliot in \cite{E} it is possible to take $\delta(\epsilon)=C\epsilon^{19}$ and using Granville and Soundararajan \cite[Corollary 3]{G-S1} $\delta(\epsilon)=C\epsilon^{2.76}$, that is currently the state of the art result. Also in \cite[$\S$ 9]{G-S2} Granville and Soundararajan show that \cite[Corollary 3]{G-S1} is optimal for general multiplicative functions, but it is worth to note that the  ``worst'' function they give is not a Dirichlet character. It is thus interesting to ask if it is possible to improve \cite[Corollary 3]{G-S1} for Dirichlet characters. In this paper we show that this can be done for real multiplicative functions proving Lemma \ref{lemma:diff}, we thus improve the size of the $\delta(\epsilon)$ in the chase of non-principal real characters. While this result follows easily from the current techniques, we were not able to find it in the literature and believe it is of interest. We obtain the following result that appears optimal with the current techniques.
\begin{theorem}
\label{theo:main}
Given $\epsilon>0$ there exist $\delta>0$ and $p_0(\epsilon)\ge 2$ such that for any non-principal real character $\chi$ modulo a prime $p\ge p_0(\epsilon)$ \eqref{eq:B} holds with $N_0(\epsilon,p)=p^{1/4-\delta}$ with
\begin{equation*}
\delta(\epsilon)=C\epsilon^{2},
\end{equation*}
with $C$ a sufficiently large absolute constant.
\end{theorem}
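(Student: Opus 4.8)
My approach is to argue by contraposition inside the pretentious framework of Granville and Soundararajan. Suppose that $\chi$ is a non-principal real character modulo the prime $p$ and that, for $N=p^{\theta}$, the normalized sum satisfies
\begin{equation*}
\frac{1}{N}\left|\sum_{n\le N}\chi(n)\right|>\epsilon;
\end{equation*}
the goal is to force $\theta<\tfrac14-C\epsilon^{2}$, which is precisely the statement that \eqref{eq:B} holds with $N_0(\epsilon,p)=p^{1/4-\delta}$ and $\delta=C\epsilon^{2}$. Writing $\D(f,g;x)^{2}=\sum_{p'\le x}\frac{1-\mathrm{Re}\,f(p')\overline{g(p')}}{p'}$ for the usual distance between multiplicative functions, the first step is the Halász-type mean-value estimate in the quantitative form of \cite{G-S1}: a normalized sum exceeding $\epsilon$ at scale $N$ produces a real number $t_0$, with $|t_0|$ controlled, such that $\D(\chi,n^{it_0};N)^{2}\le\log(1/\epsilon)+O(\log\log(1/\epsilon))$.

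The decisive step, and the one place where the real hypothesis is essential, is Lemma \ref{lemma:diff}. Since $\chi$ is real one has $\D(\chi,n^{it};N)=\D(\chi,n^{-it};N)$, so the triangle inequality for $\D$ gives $\D(1,n^{2it_0};N)\le 2\,\D(\chi,n^{it_0};N)$; as $\D(1,n^{2it_0};N)^{2}=\sum_{p'\le N}\frac{1-\cos(2t_0\log p')}{p'}$ is forced to be large unless $t_0$ is essentially $0$, the twist $n^{it_0}$ cannot help, and Lemma \ref{lemma:diff} lets me discard it and retain only the untwisted distance, giving $\D(\chi,1;N)^{2}\le\log(1/\epsilon)+O(\log\log(1/\epsilon))$ as the sole constraint. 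This is exactly the source of the gain over the general multiplicative case: in \cite[Corollary 3]{G-S1} the free parameter $t$ survives the extremal optimization and inflates the exponent to $2.76$, consistent with the observation in \cite[\S9]{G-S2} that the extremal ``worst'' function realizing it is not a Dirichlet character.

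With the untwisted bound in hand, I would feed it into the quantitative decay-of-mean-values machinery of \cite{G-S1} --- the same engine that yields their Corollary 3 --- together with Burgess' estimate \cite{Burgess1,Burgess2} at the scale $p^{1/4+o(1)}$, and track how $\D(\chi,1;x)^{2}$, whose increments $\tfrac{1-\chi(p')}{p'}$ lie in $\{0,2/p'\}$, propagates from $N$ up to that scale. In the general setting this engine must still optimize over the twist, which is what raises the exponent to $2.76$; with the twist already removed by the real hypothesis the same computation returns the clean exponent $2$, forcing $\tfrac14-\theta\gg\epsilon^{2}$. The main obstacle is thus the middle step: proving Lemma \ref{lemma:diff} and verifying that, for real characters, discarding the twist is quantitatively lossless, so that the optimization of \cite{G-S1} collapses from $2.76$ to exactly $2$; the Halász input and the appeal to Burgess follow the established template of \cite{Hild,G-S1}.
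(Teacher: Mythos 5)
Your outline follows the same route as the paper: the paper packages your dichotomy into Lemma \ref{lemma:diff} (proved by playing a Hal\'asz bound for \emph{real} functions, Theorem \ref{theo:H-T}, against the Lipschitz estimate Lemma \ref{lemma:G_S}), and then applies it with $x=N\ge p^{1/4-\delta}$, $x'=Np^{2\delta}$, bounding $M(x')$ by Burgess; your contrapositive packaging is equivalent in content. The genuine gap is in the step you yourself call decisive: removing the twist $n^{it_0}$ \emph{losslessly}. The triangle inequality only gives $\D(1,n^{2it_0};N)^{2}\le 4L$ with $L=\log(1/\epsilon)+O(\log\log(1/\epsilon))$, whence $|t_0|\ll e^{4L}/\log N$; but for $t_0$ in that range the quantity $\D(n^{it_0},1;N)^{2}$ can itself be as large as $4L+O(1)$, so all you may conclude is
\begin{equation*}
\D(\chi,1;N)\le \D(\chi,n^{it_0};N)+\D(n^{it_0},1;N)\le 3\sqrt{L}+O(1),
\end{equation*}
i.e.\ $\D(\chi,1;N)^{2}\le 9\log(1/\epsilon)+\cdots$, not $\log(1/\epsilon)+O(\log\log(1/\epsilon))$ as you assert. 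This constant is fatal, because the untwisted distance enters the decay step \emph{exponentially}: Lemma \ref{lemma:G_S} carries the factor $\exp\left(\sum_{p\le x'}(1-\chi(p))/p\right)=\exp\left(\D(\chi,1;x')^{2}\right)$, so your chain gives $\epsilon\le o_{p}(1)+C\delta\,\epsilon^{-9+o(1)}$ and hence only $\delta\gg\epsilon^{10+o(1)}$ --- weaker even than the $\epsilon^{2.76}$ of \cite[Corollary 3]{G-S1} you set out to improve, and nowhere near $\epsilon^{2}$. (Equivalently: running the large/small-distance dichotomy with a Hal\'asz input of strength $e^{-D/9}$ yields Lipschitz exponent $\frac{1/9}{1+1/9}=\frac{1}{10}$, not $\frac{1}{2}$.)

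Moreover, this loss cannot be repaired by any distance manipulation, because ``realness makes the twist harmless with constant $1$'' is false for general real multiplicative functions: the completely multiplicative $f(p)=\mathrm{sign}(\cos(\log p))$ has $\sum_{p\le x}(1-f(p))/p=(1+o(1))\log\log x$ while $\D(f,n^{i};x)^{2}=\left(1-\frac{2}{\pi}+o(1)\right)\log\log x$, and by Selberg--Delange its mean value genuinely oscillates with amplitude $(\log x)^{-(1-2/\pi)+o(1)}$; this real, non-character function is exactly the extremal object behind the exponent $2.76\approx\left(1-\frac{2}{\pi}\right)^{-1}$ and the optimality discussion in \cite[\S 9]{G-S2}. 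So the hypothesis ``$\chi$ real'' alone cannot collapse $2.76$ to $2$; the paper performs this step by importing Theorem \ref{theo:H-T} from \cite{ten}, asserting that for real $f$ the Hal\'asz minimum over $\gamma$ equals $\min\{\log\log x+O(1),\sum_{p\le x}(1-f(p))/p\}$, and that assertion --- which the example above puts under serious pressure --- is precisely what your sketch would need; it is the point on which both your argument and the paper's stand or fall, and it cannot be waved through. A smaller logical point: you appeal to Lemma \ref{lemma:diff} to justify discarding the twist, but Lemma \ref{lemma:diff} \emph{is} the statement being established (it is the Lipschitz bound itself), so that appeal is circular as written.
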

In Section \ref{sec:1} we prove a general version of Theorem \ref{theo:PVb}, Section \ref{sec:2} we prove Theorem \ref{theo:GRHB} and a more general version of it where we assume weaker versions of Conjecture \ref{Burgess} and in Section \ref{sec:4} we prove Theorem \ref{theo:main}.
\section*{Acknowledgements}
I would like to thank Bryce Kerr, P\"ar Kurlberg, Igor Shparlinski and Tim Trudgian for the useful comments and suggestions.\par
The research was partially supported by OP RDE project \newline No.
CZ.$02.2.69/0.0/0.0/18\_053/0016976$ International mobility of research,
technical and administrative staff at the Charles University.
\section{Improving P\'{o}lya--Vinogradov for a limited range without using Burgess bound}
\label{sec:1}
In this section we prove the following general version of Theorem \ref{theo:PVb}.
\begin{theorem}
\label{theo:PVHild}
Let $R:\mathbb{R}^+\rightarrow\mathbb{R}^+$ and $a:\mathbb{R}^+\rightarrow\mathbb{R}^+$ and $x$ such that $2 \le R < a(q)\le q$. Take $N$ and $q$ and assume
\begin{equation*}
\frac{qR(q)}{a(q)}<N<\frac{q}{R(q)}\left( 1-\frac{R(q)}{a(q)}\right).
\end{equation*}
Then, for $\chi$ an even primitive character, we have
\begin{align*}
\sum_{n=1}^N \chi(n) \le & \sqrt{q}2 \left( \frac{2}{\pi} \log a(q) +\frac{2}{\pi}\left( C +\log 2 +\frac{3}{a(q)}\right)\right)\\ & +O \left(\sqrt{q}\max\left(\log \frac{\log q}{\log a(q)}, \frac{(\log R(q))^{3/2}}{\sqrt{R(q)}}\log \frac{q}{a(q)}\right) \right).
\end{align*}
\end{theorem}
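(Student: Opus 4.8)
The plan is to begin from the P\'olya Fourier expansion of the character sum. Since $\chi$ is even and primitive modulo $q$, pairing the frequencies $n$ and $-n$ collapses that expansion into a sine series, giving (with $e(x)=e^{2\pi i x}$ and $|\tau(\chi)|=\sqrt q$ because $\chi$ is primitive)
\begin{equation*}
\sum_{n=1}^N \chi(n) = \frac{\tau(\chi)}{\pi}\sum_{1\le n\le q}\frac{\overline{\chi}(n)}{n}\sin\!\left(\frac{2\pi nN}{q}\right) + O(\log q).
\end{equation*}
Taking absolute values reduces the whole problem to estimating the weighted sine sum $\sum_n \frac{\overline{\chi}(n)}{n}\sin(2\pi nN/q)$, and the argument is then a matter of splitting this sum at the threshold $n=a(q)$ and treating the two resulting ranges by entirely different means.

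For the short range $n\le a(q)$ I would exploit no cancellation at all: using $|\overline{\chi}(n)|\le 1$ and $|\sin|\le 1$, this part contributes at most $\sum_{n\le a(q)}\frac1n$. An explicit estimate of this harmonic sum, of the form $\log a(q)+C+O(1/a(q))$ with $C$ Euler's constant, produces the main term, the numerical constants $\frac{2}{\pi}$ and $C+\log2+3/a(q)$, and the outer factor $2$ all arising from bookkeeping the truncated expansion together with this harmonic sum. This is precisely the contribution one cannot avoid by elementary means, and it is what forces the shape $\sqrt q\log a(q)$.

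The heart of the matter is the tail $a(q)<n\le q$, and this is where the hypothesis on the range of $N$ and Montgomery--Vaughan enter. Writing $\sin(2\pi nN/q)=\frac{1}{2i}\left(e(nN/q)-e(-nN/q)\right)$ and applying partial summation, the task becomes to bound the twisted exponential sums $\sum_{n\le t}\overline{\chi}(n)e(\pm nN/q)$ for $a(q)<t\le q$. The role of the double inequality on $N$ is to keep $\theta:=N/q$ away from the two low-denominator fractions $0$ and $1/R(q)$ that sandwich it --- by more than $R(q)/a(q)$ and $1/a(q)$ respectively --- so that the rational approximations to $\theta$ governing the exponential sum have denominator of order $R(q)$; this is exactly the minor-arc input that Montgomery--Vaughan's \cite[Corollary 1]{MV} needs in order to furnish square-root cancellation. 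Their corollary yields a bound of the shape $t\bigl(\frac{(\log R(q))^{3/2}}{\sqrt{R(q)}}+\frac{1}{\log t}\bigr)$ for each of the two exponential sums.

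Feeding this into the partial summation and integrating the weight $1/t$ over $a(q)<t\le q$ is then routine, and it is exactly here that the two error terms appear: the first summand integrates to $\frac{(\log R(q))^{3/2}}{\sqrt{R(q)}}\log\frac{q}{a(q)}$, while the second, via $\int_{a(q)}^{q}\frac{dt}{t\log t}=\log\frac{\log q}{\log a(q)}$, produces $\log\frac{\log q}{\log a(q)}$; bounding their sum by twice the larger gives the stated maximum. The step I expect to be the main obstacle is the rigorous verification that the range hypothesis genuinely forces $\theta=N/q$ onto the minor arc for the correct denominator $R(q)$, so that \cite[Corollary 1]{MV} may be applied with parameter exactly $R(q)$ uniformly in $t$; once that compatibility is secured, what remains is careful tracking of the explicit constants in the main term.
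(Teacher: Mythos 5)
Your proposal follows the paper's own proof in all essentials: the same Gauss-sum/P\'olya expansion reducing the problem to the weighted sum $\sum_{0<|a|<q/2}\overline{\chi}(a)\left(e(aN/q)-1\right)/a$, the same split at $|a|=a(q)$, and, for the tail, exactly the argument of the paper's Lemma \ref{LI1}: partial summation reduces matters to the twisted sums $\sum_{n\le t}\overline{\chi}(n)e(nN/q)$ for $a(q)\le t\le q$; Dirichlet approximation at scale $t/R(q)$ produces $r/s$ with $|N/q-r/s|\le R/(st)\le 1/s^2$; the lower bound $N>qR/a$ rules out $r=0$; the upper bound $N<\frac{q}{R}\left(1-\frac{R}{a}\right)$ then forces $s\gg R(q)$; and Lemma \ref{MV} followed by integration of the weight $1/t$ yields precisely the two terms inside the maximum. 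So the step you flag as the ``main obstacle'' is carried out in the paper exactly as you sketch it, and your account of it is correct.

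The one place you deviate is the short range $n\le a(q)$, and there your accounting of the constants is wrong, though harmlessly so for the stated inequality. The constants $\frac{2}{\pi}$ and $C+\log 2+3/a(q)$ do not arise from ``bookkeeping'' the harmonic sum $\sum_{n\le a(q)}1/n\le\log a(q)+\gamma+O(1/a(q))$: the trivial bound $|\sin|\le 1$ can only produce leading constant $1$, never $\frac{2}{\pi}$. The paper instead invokes Pomerance's explicit estimate (Lemma \ref{LI2}), $\sum_{n \le x}|\sin (\alpha n)|/n \le \frac{2}{\pi} \log x +\frac{2}{\pi}\left( C +\log 2 +3/x\right)$, which is nontrivial precisely because the mean value of $|\sin|$ is $\frac{2}{\pi}<1$; that lemma is the sole source of those constants. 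Your version still proves the theorem as written, because your expansion carries the prefactor $|\tau(\chi)|/\pi=\sqrt{q}/\pi$, so the trivial bound gives $\frac{\sqrt{q}}{\pi}\left(\log a(q)+\gamma+O(1/a(q))\right)$, which is numerically smaller than the stated main term $2\sqrt{q}\left(\frac{2}{\pi}\log a(q)+\frac{2}{\pi}(C+\log 2+3/a(q))\right)$; but to reproduce the constants exactly as they appear in the statement you need Lemma \ref{LI2}, not the triangle inequality.
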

Let $f: \mathbb{Z} \rightarrow  \mathbb{C}$ be a multiplicative function with $| f(n)|\le 1$.
With $\alpha$ real and $e(\alpha)=\exp(2\pi i \alpha)$ write
\begin{equation*}
S(\alpha)=\sum^{N}_{n=1} f(n)e(n\alpha).
\end{equation*}
We will need \cite[Corollary 1]{MV}.
\begin{lemma}
\label{MV}
Suppose that $|\alpha - a/q| \le q^{-2}$, $(a,q)=1$ and $2 \le R\le q \le N/R$. Then
\begin{equation*}
\label{SA}
S(\alpha) \ll \frac{N}{\log N}+   \frac{N\log^{\frac{3}{2}} (R)}{\sqrt{R}}.
\end{equation*}
\end{lemma}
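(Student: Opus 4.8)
The plan is to follow the method of Montgomery and Vaughan, reducing the bound on $S(\alpha)$ to a bound on the logarithmically weighted sum and then exploiting the multiplicativity of $f$ together with the large sieve. First I would pass to the weighted sum by writing
\begin{equation*}
S(\alpha)\log N=\sum_{n\le N}f(n)e(n\alpha)\log n+\sum_{n\le N}f(n)e(n\alpha)\log(N/n).
\end{equation*}
Since $\sum_{n\le N}\log(N/n)=N+O(\log N)$ by Stirling, the second sum is $O(N)$, so it suffices to prove that
\begin{equation*}
T:=\sum_{n\le N}f(n)e(n\alpha)\log n\ll N+N\log N\,\frac{(\log R)^{3/2}}{\sqrt R},
\end{equation*}
after which dividing through by $\log N$ yields the two terms $N/\log N$ and $N(\log R)^{3/2}/\sqrt R$ of the stated estimate.

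Next I would expand the weight using $\log n=\sum_{p^{a}\| n}\log(p^{a})$, the sum being over the exact prime-power divisors of $n$, and use that $f$ is multiplicative: writing $n=p^{a}m$ with $p^{a}\| n$ and $(p,m)=1$ gives $f(n)=f(p^{a})f(m)$, so that
\begin{equation*}
T=\sum_{p^{a}\le N}(a\log p)\,f(p^{a})\sum_{\substack{m\le N/p^{a}\\(m,p)=1}}f(m)\,e(p^{a}m\alpha).
\end{equation*}
The contribution of the prime powers with $a\ge2$ is $O(N)$ by the trivial estimate $\sum_{a\ge2}\sum_{p}(a\log p)N/p^{a}\ll N$, so the real work is the bilinear sum over primes ($a=1$),
\begin{equation*}
\sum_{p\le N}(\log p)\,f(p)\,V_{p},\qquad V_{p}:=\sum_{\substack{m\le N/p\\(m,p)=1}}f(m)\,e(pm\alpha).
\end{equation*}
I would treat this by a dyadic decomposition $p\sim P$, applying Cauchy--Schwarz in the variable $p$ and then the large sieve to $\sum_{p\sim P}|V_{p}|^{2}$: with $M=N/P$ the length of the inner sum and $\delta_{P}$ the minimal spacing of the points $p\alpha$ modulo one for $p\sim P$, the large sieve gives $\sum_{p\sim P}|V_{p}|^{2}\ll (M+\delta_{P}^{-1})M$, whence each dyadic block contributes $\ll (\log P)^{1/2}N^{1/2}(M+\delta_{P}^{-1})^{1/2}$.

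The heart of the matter, and the step I expect to be the main obstacle, is controlling the spacing $\delta_{P}$ and summing the dyadic blocks without losing more than a power of $\log R$. The diagonal term $M=N/P$ sums over dyadic $P$ to $O(N)$ and causes no difficulty. For the off-diagonal term involving $\delta_{P}^{-1}$, the Diophantine hypothesis $|\alpha-a/q|\le q^{-2}$, $(a,q)=1$, with $R\le q\le N/R$, is exactly what is needed: for distinct primes $p,p'\sim P$ one bounds the distance $\|(p-p')\alpha\|$ to the nearest integer from below using $\alpha=a/q+O(q^{-2})$, which shows that the points $p\alpha$ cannot cluster too badly and pins down $\delta_{P}^{-1}$ in terms of $q$. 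The delicate bookkeeping is then to sum the off-diagonal contributions over the dyadic ranges while tracking how $\delta_{P}$ varies with $P$, so that the accumulated logarithmic factors combine to exactly $(\log R)^{3/2}$ and the total is $\ll N\log N\,(\log R)^{3/2}/\sqrt R$; it is precisely here that the full strength of the condition $R\le q\le N/R$ is consumed, and it is the part of the argument requiring the most care.

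Finally I would reassemble: the prime-power remainder and the diagonal large-sieve term give $O(N)$, the off-diagonal term gives the $N\log N\,(\log R)^{3/2}/\sqrt R$ contribution, and dividing by $\log N$ produces the bound of the lemma. The only genuinely non-routine ingredients are the large-sieve inequality and the spacing estimate for $\{p\alpha\}$; everything else is the multiplicative peeling identity and elementary summation.
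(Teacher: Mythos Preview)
The paper does not supply a proof of this lemma: it is quoted as \cite[Corollary~1]{MV} and used as a black box throughout. Your sketch is an accurate high-level outline of Montgomery and Vaughan's original argument --- the passage to the log-weighted sum via $S(\alpha)\log N$, the multiplicative peeling of a prime power, and the Cauchy--Schwarz/large-sieve treatment of the resulting bilinear form are exactly their method, so in substance you are reconstructing the cited proof rather than offering an alternative.

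One refinement worth flagging: the points $p\alpha\pmod 1$ for $p\sim P$ are not globally $\delta_P$-spaced once $P$ is comparable to or larger than $q$ (pairs $p,p'$ with $q\mid p-p'$ can be extremely close), so Montgomery and Vaughan do not apply the large sieve with a single minimal spacing. Instead they control how many primes fall into each short arc and sum the large-sieve contributions accordingly; it is this more careful counting, combined with the two-sided hypothesis $R\le q\le N/R$, that produces the clean factor $(\log R)^{3/2}/\sqrt{R}$. You correctly identify this step as the main obstacle, but the description in terms of a single spacing parameter $\delta_P$ per dyadic block is an oversimplification of what their argument actually does.
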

We now obtain a variation of \cite[Lemma 3]{Hild_PV}.
\begin{lemma}
\label{LI1}
Let $R:\mathbb{N}\rightarrow\mathbb{N}$ and $a:\mathbb{N}\rightarrow\mathbb{N}$ and $x$ such that $2 \le R \le a(q)$, $a(q)\le x \le q$ and $R/a<1$. Take $N$ and $q$ and assume
\begin{equation}
\label{eq:N}
\frac{qR(q)}{a(q)}<N<\frac{q}{R(q)}\left( 1-\frac{R(q)}{a(q)}\right).
\end{equation}
We have, uniformly for all primitive characters $\chi$  modulo $q$ as above,
\begin{equation*}
\left| \sum_{n \le x} \chi(n) e(\alpha n) \right| \ll \max\left(\frac{x}{\log x}, x\frac{(\log R(q))^{3/2}}{\sqrt{R(q)}}\right).
\end{equation*}
\end{lemma}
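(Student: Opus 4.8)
The plan is to read off the estimate as an immediate consequence of the Montgomery--Vaughan bound in Lemma \ref{MV}, applied to the multiplicative function $f=\chi$ (which satisfies $|\chi(n)|\le 1$) with the summation truncated at $x$. Concretely, I would let $x$ play the role of ``$N$'' in Lemma \ref{MV}, so that its right-hand side reads
\[
\frac{x}{\log x}+\frac{x(\log R(q))^{3/2}}{\sqrt{R(q)}}\ll\max\left(\frac{x}{\log x},\ \frac{x(\log R(q))^{3/2}}{\sqrt{R(q)}}\right),
\]
which is precisely the claimed bound. Everything therefore reduces to verifying the Diophantine hypothesis of Lemma \ref{MV}: that $\alpha$ admits a rational approximation $a/q'$ with $(a,q')=1$, $|\alpha-a/q'|\le (q')^{-2}$, and, crucially, with denominator confined to the window $R(q)\le q'\le x/R(q)$.

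The upper half of this window is the easy part. First I would apply Dirichlet's approximation theorem with parameter $Q=x/R(q)$, producing $a/q'$ with $(a,q')=1$, $1\le q'\le x/R(q)$ and $|\alpha-a/q'|\le \frac{1}{q'Q}=\frac{R(q)}{q'x}$. The inequality $q'\le x/R(q)$ then gives $\frac{R(q)}{q'x}\le (q')^{-2}$ for free, so both the required quality of approximation and the upper bound on $q'$ in Lemma \ref{MV} are automatic. I would also record here that the window is genuinely non-degenerate: non-emptiness of the range \eqref{eq:N} forces $a(q)>R(q)^2$, so with $R(q)\le a(q)\le x$ we get $x/R(q)\ge a(q)/R(q)>R(q)$.

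The heart of the matter --- and the step I expect to be the main obstacle --- is the lower bound $q'\ge R(q)$, i.e. ruling out the major arcs on which $\alpha$ lies abnormally close to a rational of small denominator. That some restriction here is unavoidable is clear, since at $\alpha=0$ with $x$ as small as $a(q)$ the asserted bound would demand cancellation in a genuinely short character sum, which is false in general. This is exactly where the range \eqref{eq:N} for $N$ enters, because the relevant $\alpha$ is the P\'olya--Vinogradov phase $\alpha=N/q$ arising from the Fourier expansion of $\sum_{n\le N}\chi(n)$. The bounds $\frac{qR(q)}{a(q)}<N<\frac{q}{R(q)}\left(1-\frac{R(q)}{a(q)}\right)$ confine $\alpha=N/q$ to the interval $\left(\frac{R(q)}{a(q)},\frac{1}{R(q)}\right)$, and I would use them to prove the key Diophantine estimate $\|j\alpha\|>R(q)/x$ for every integer $1\le j<R(q)$, where $\|\cdot\|$ denotes the distance to the nearest integer; for small $j$ this follows from $N/q>R(q)/a(q)\ge R(q)/x$, and for $j$ near $R(q)$ from the upper bound on $N$ together with $x>R(q)^2$. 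Since the Dirichlet approximant satisfies $\|q'\alpha\|\le R(q)/x$, this estimate excludes $q'<R(q)$ and places $\alpha$ on the minor arc. With $R(q)\le q'\le x/R(q)$ now secured, Lemma \ref{MV} applies verbatim and the proof is complete.
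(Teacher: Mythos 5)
Your proposal is correct and takes essentially the same route as the paper: Dirichlet approximation of the phase $\alpha=N/q$ with parameter $x/R(q)$, exclusion of denominators smaller than $R(q)$ using both sides of \eqref{eq:N}, and then a direct application of Lemma \ref{MV}. The only difference is organizational: you exclude small denominators via the contrapositive estimate $\|j\alpha\|>R(q)/x$ for $1\le j<R(q)$, whereas the paper first rules out the numerator $r=0$ using the lower bound on $N$ and then deduces $s>R(q)$ from the upper bound.
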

\begin{proof}
Set $M=[x]$, by Dirichlet's theorem there exist integers $r$ and $s$, where $(r, s)=1$ and $ 1\le s \le M/R$, such that
\begin{equation}
\label{alpha1}
\left| \frac{N}{q} - \frac{r}{s} \right| \le \frac{1}{sM/R}.
\end{equation}
Assuming $r=0$ we obtain, from \eqref{alpha1}, 
\begin{align*}
N\le \frac{qR}{sx},
\end{align*}
but this, together with the left-hand side of \eqref{eq:N}, gives 
\begin{align*}
\frac{1}{a(q)}<\frac{1}{sx},
\end{align*}
that is in contradiction with the assumption $a(q)\le x$ and $1\le s$. We can thus assume $r\neq 0$ which, \eqref{alpha1} together with $M=[x]$ and the assumptions $a(q)\le x \le q$ and $R/a<1$, gives 
\begin{align}
\label{eq:s}
s \ge \frac{q}{N}\left(r-\frac{R}{M}\right)\ge \frac{q}{N}\left(1-\frac{R(q)}{a(q)}\right)>0.
\end{align} 
Now from the right-hand side of \eqref{eq:N} we have
\begin{align*}
\frac{q}{N}>R(q)\left(1-\frac{R(q)}{a(q)}\right)^{-1}.
\end{align*}
Using this together with \eqref{eq:s} we obtain $s\gg R$.
Using $s\gg R$, the result follows from Lemma \ref{MV}. 
\end{proof}
We then need an explicit bound on a trigonometric sum, by Pomerance in \cite[Lemma 3]{Pomerance}. 
\begin{lemma}
\label{LI2}
Uniformly for $x \ge 1$ and with $\alpha$ real we have
\begin{equation*}
\sum_{n \le x}\frac{|\sin (\alpha n)|}{n} \le \frac{2}{\pi} \log x +\frac{2}{\pi}\left( C +\log 2 +\frac{3}{x}\right).
\end{equation*}
\end{lemma}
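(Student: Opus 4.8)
The plan is to start from the classical Fourier expansion of the absolute value of the sine,
\[
|\sin\theta|=\frac{2}{\pi}-\frac{4}{\pi}\sum_{k=1}^{\infty}\frac{\cos(2k\theta)}{4k^2-1},
\]
which converges absolutely and uniformly in $\theta$. Substituting $\theta=\alpha n$, dividing by $n$ and summing over $n\le x$ (the double series being absolutely convergent for fixed $x$, the order of summation may be interchanged) I would obtain
\[
\sum_{n\le x}\frac{|\sin(\alpha n)|}{n}=\frac{2}{\pi}H(x)-\frac{4}{\pi}\sum_{k=1}^{\infty}\frac{c_k(x)}{4k^2-1},
\qquad H(x)=\sum_{n\le x}\frac1n,\quad c_k(x)=\sum_{n\le x}\frac{\cos(2k\alpha n)}{n}.
\]
The main term is controlled by the explicit estimate $H(x)\le\log x+C+\tfrac{1}{2\lfloor x\rfloor}$ ($C$ being Euler's constant), which already yields the leading $\tfrac{2}{\pi}\log x+\tfrac{2}{\pi}C$. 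What remains is to bound the oscillatory part \emph{from below}, i.e.\ to show $\sum_{k}(4k^2-1)^{-1}c_k(x)\ge-\tfrac12\log 2-O(1/x)$, so that after multiplication by $-\tfrac4\pi$ it produces exactly the term $\tfrac2\pi\log 2$ and the leftover fits inside the allowed $\tfrac2\pi\cdot\tfrac3x$.

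The heart of the matter is a uniform lower bound for the truncated cosine sum: for every real $\psi$ and every $N$,
\[
\sum_{n\le N}\frac{\cos(n\psi)}{n}\ge-\log 2-\frac{C_0}{N},
\]
with $C_0$ an absolute constant. I would prove this by comparing the partial sum with the closed form of the full series, writing $\sum_{n\le N}\tfrac{\cos n\psi}{n}=\operatorname{Re}\bigl({-}\log(1-e^{i\psi})\bigr)-\operatorname{Re}\sum_{n>N}\tfrac{e^{in\psi}}{n}=-\log\!\bigl(2|\sin(\psi/2)|\bigr)-\operatorname{Re}(\mathrm{tail})$, then using $-\log(2|\sin(\psi/2)|)\ge-\log 2$ together with the Abel-summation bound $\bigl|\sum_{n>N}e^{in\psi}/n\bigr|\le\bigl((N+1)|\sin(\psi/2)|\bigr)^{-1}$, which itself rests on $\bigl|\sum_{n\le M}e^{in\psi}\bigr|\le|\sin(\psi/2)|^{-1}$. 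Applying this with $\psi=2k\alpha$ (so $|\sin(\psi/2)|=|\sin(k\alpha)|$) and summing against the positive weights $(4k^2-1)^{-1}$, the telescoping identity $\sum_{k\ge1}(4k^2-1)^{-1}=\tfrac12$ collapses the complete-series contributions into the single clean term $\tfrac2\pi\log 2$, while the tail pieces assemble into the residual $O(1/x)$.

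The main obstacle is precisely the uniformity of this lower bound near the resonances $\psi\in 2\pi\Z$ (equivalently $k\alpha\in\pi\Z$): there $|\sin(\psi/2)|\to 0$ and the Abel tail bound $\bigl((N+1)|\sin(\psi/2)|\bigr)^{-1}$ blows up, so the naive ``complete series minus tail'' estimate degenerates — indeed, bounding the complete-series and tail parts \emph{separately} introduces spurious large terms of opposite sign that must cancel. The resolution is to split the range of $k$ according to the size of $|\sin(k\alpha)|$ relative to $1/N$: away from the resonances the Abel bound is $O(1/N)$ and the argument above applies verbatim, whereas within $O(1/N)$ of a resonance the phase $n\psi$ sweeps only an interval of bounded length, so the heavily weighted small-$n$ terms dominate and force $c_k(x)$ to be large and positive, whence the required lower bound holds trivially. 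The contributions of the exactly resonant $k$ are harmless as well, since there $c_k(x)=H(x)\ge0$ satisfies the bound at once and in any case enters the original sum with a negative sign. Tracking the absolute constant through this dichotomy so that the total error stays below $\tfrac2\pi\cdot\tfrac3x$ is the only genuinely delicate point; the finitely many values $1\le x<2$ are absorbed directly into the generous constant $\tfrac2\pi\bigl(C+\log 2+3/x\bigr)$.
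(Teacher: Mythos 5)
The paper does not actually prove this lemma: it is quoted verbatim from Pomerance \cite[Lemma 3]{Pomerance}, so there is no internal argument to compare with, and the relevant benchmark is Pomerance's own proof --- which your reconstruction essentially matches (Fourier expansion of $|\sin\theta|$, the harmonic-sum estimate, and a uniform lower bound for the cosine sums handled by a resonance dichotomy). Your sketch is correct, and the one point you flag as genuinely delicate does close, with the stated constant. Indeed, take $N=\lfloor x\rfloor$, use the tail bound $\left|\sum_{n>N}e^{in\psi}/n\right|\le \left((N+1)\left|\sin(\psi/2)\right|\right)^{-1}$, and split at $\left|\sin(\psi/2)\right|=1/(2N)$: in the resonant range every phase $n\psi$ with $n\le N$ lies within $\pi/2$ of a multiple of $2\pi$ (since $2\arcsin t\le \pi t$), so all terms of $c_k(x)$ are nonnegative and $c_k(x)\ge 0$; in the non-resonant range the function $-\log s-1/\left((N+1)s\right)$ on the interval $[1/(2N),1]$ attains its minimum at an endpoint, and both endpoint values $-1/(N+1)$ and $\log(2N)-2N/(N+1)$ are $\ge -1/N$ for every $N\ge 1$, giving $c_k(x)\ge -\log 2-1/N$ uniformly in $\psi$. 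Combining this with $H(N)\le \log N+\gamma+1/(2N)$ and $\sum_{k\ge 1}(4k^2-1)^{-1}=1/2$ gives a total error of at most $\frac{2}{\pi}\left(\frac{1}{2N}+\frac{1}{N}\right)=\frac{2}{\pi}\cdot\frac{3}{2N}\le \frac{2}{\pi}\cdot\frac{3}{x}$, since $x<N+1\le 2N$; so, identifying the constant $C$ in the statement with Euler's constant $\gamma$, the inequality holds as stated for all $x\ge 1$, and no separate treatment of small $x$ is even needed.
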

We can now prove Theorem \ref{theo:PVHild}.
\begin{proof}
We take $\chi$ primitive and start  with
\begin{equation*}
\chi(n)=\frac{1}{d (\overline{\chi})}\sum_{a=1}^q \overline{\chi}(a) e\left( \frac{an}{q} \right) = \frac{1}{d (\overline{\chi})} \sum_{0< |a|< q/2} \overline{\chi}(a) e\left( \frac{an}{q} \right),
\end{equation*}
where $d (\overline{\chi})$ is the Gaussian sum. Summing over $ 1 \le n \le N$, we obtain
\begin{align*}
\sum_{n=1}^N \chi(n)&= \frac{1}{d (\overline{\chi})} \sum_{0< |a|< q/2} \overline{\chi}(a) \sum_{n=1}^N e\left( \frac{an}{q} \right)\\ &= \frac{1}{d (\overline{\chi})} \sum_{0< |a|< q/2} \overline{\chi}(a) \frac{ e\left( \frac{aN}{q} \right)-1}{1- e\left( \frac{-a}{q} \right)}.
\end{align*}
It follows that
\begin{equation}
\label{eq:sum}
\sum_{n=1}^N \chi(n)\le \frac{\sqrt{q}}{2 \pi} \left| \sum_{0< |a|< q/2}\frac{\overline{\chi(a)}\left( e(\frac{aN}{q})-1\right)}{a}\right| + O(\sqrt{q}).
\end{equation}
Now we split the inner sum in two parts: $\Sigma_1$ with $0<|a|\le a(q)$ and $\Sigma_2$ with $a(q)<|a|<q/2$.\par
Now as $\chi$ is even, by partial summation and Lemma \ref{LI1}, we have
\begin{align*}
\Sigma_2 \ll  \max\left(\log \frac{\log q}{\log a(q)}, \frac{(\log R(q))^{3/2}}{\sqrt{R(q)}}\log \frac{q}{a(q)}\right).
\end{align*}
We now note that
\begin{equation*}
 \Sigma_1= 2i\sum\limits_{1\le a \le a(q)}\frac{\overline{\chi(a)}\sin(\frac{2\pi aN}{q})}{a},
\end{equation*}
and thus from Lemma \ref{LI2} we obtain
\begin{equation*}
  \left|\Sigma_1 \right| \le
     2 \left( \frac{2}{\pi} \log a(q) +\frac{2}{\pi}\left( C +\log 2 +\frac{3}{a(q)}\right)\right).
\end{equation*}
We thus obtain the desired result Theorem \ref{theo:PVHild}.
\end{proof}

\section{Improving P\'{o}lya--Vinogradov using a Burgess-like bound}
\label{sec:2}
In this section we aim to prove Theorem \ref{theo:GRHB}.
We first prove the following fundamental result.
\begin{lemma}
\label{PVconj}
Assuming Conjecture \ref{Burgess} holds, then we have the following result.
Take $a:\mathbb{R}^+\rightarrow\mathbb{R}^+$ such that $a(q)\le x \le q$ and with $x$ such that $2 \le (\log x)^{2+\epsilon} \le a(q)$. We have, uniformly for all primitive characters $\chi$  modulo $q$ as above and $\log a(q) / \log \log q \rightarrow \infty$,
\begin{equation*}
\left| \sum_{n \le x} \chi(n) e(\alpha n) \right| \ll_{\epsilon} \frac{x}{\log x}.
\end{equation*}
\end{lemma}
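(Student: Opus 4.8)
The plan is to treat $\sum_{n\le x}\chi(n)e(\alpha n)$ by a Hardy--Littlewood type dissection of the frequency $\alpha$: I would set $R=(\log x)^{2+\epsilon}$ and $M=[x]$, and apply Dirichlet's theorem with $Q=M/R$ to obtain coprime $r,s$ with $1\le s\le Q$ and $|\alpha-r/s|\le 1/(sQ)\le s^{-2}$. The argument then branches on the size of the denominator $s$ relative to $R$, the \emph{minor arc} $s>R$ being handled unconditionally by Lemma \ref{MV} and the \emph{major arc} $s\le R$ being where Conjecture \ref{Burgess} is spent. The point of assuming the Burgess-like bound is precisely that it lets the major arc be controlled, so that, unlike in Lemma \ref{LI1}, no hypothesis such as \eqref{eq:N} is needed to force $s$ to be large.

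On the minor arc ($R<s\le M/R$), the rational approximation has denominator $s$ in exactly the range required by Lemma \ref{MV} (with $s$ playing the role of $q$ there), which gives $\sum_{n\le x}\chi(n)e(\alpha n)\ll M/\log M+M(\log R)^{3/2}/\sqrt R$. Since $R=(\log x)^{2+\epsilon}$, the second term is $\ll x\,(\log\log x)^{3/2}(\log x)^{-1-\epsilon/2}=o(x/\log x)$, so the minor-arc contribution is $\ll x/\log x$. This is the same mechanism as in Lemma \ref{LI1}, just invoked for every $\alpha$ whose approximation happens to have a large denominator.

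On the major arc ($s\le R$) I would write $\alpha=r/s+\theta$ with $|\theta|\le 1/(sQ)$ and group $n\le x$ by $d=(n,s)$. For each $d\mid s$, set $n=dm$ and $s'=s/d$, so that $\chi(n)=\chi(d)\chi(m)$, the constraint becomes $(m,s')=1$, and $d\alpha=r/s'+d\theta$ with $|d\theta|\le 1/(s'Q)$. Expanding the coprime residues mod $s'$ by multiplicative orthogonality expresses the inner sum through $\sum_{m\le u}(\chi\psi)(m)$ for $\psi\bmod s'$; because $\chi$ is primitive of conductor $q>s\ge s'$, each $\chi\psi$ is non-principal, so Conjecture \ref{Burgess} yields $\sum_{m\le u}(\chi\psi)(m)\ll_\epsilon u/(\log u)^{3+\epsilon}$, and summing over the $\phi(s')$ characters gives $|\sum_{m\le u,\ m\equiv b'(s')}\chi(m)|\ll u/(\log u)^{3+\epsilon}$. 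Removing the factor $e(d\theta m)$ by partial summation (whose cost is governed by $|d\theta|\le 1/(s'Q)$ and $Q=M/R$), then summing over the $\phi(s')$ residues and over $d\mid s$ (with $\sum_{d\mid s}d^{-2}\le\zeta(2)$ absorbing the divisor sum), produces a total $\ll (s+R)x/(\log x)^{3+\epsilon}\ll Rx/(\log x)^{3+\epsilon}=x/\log x$, using $s\le R=(\log x)^{2+\epsilon}$.

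The hard part is the major-arc step, where the twisted sum must be converted into honest Dirichlet character sums before Conjecture \ref{Burgess} can be applied: this forces one to handle the classes with $(n,s)>1$ through the factorization $n=dm$ rather than discarding them (a trivial bound there would already exceed $x/\log x$), and to verify that every character $\chi\psi$ that arises is non-principal, which is exactly where primitivity of $\chi$ together with $s<q$ is used. A final technical point is that Conjecture \ref{Burgess} is only available once $\log u/\log\log q\to\infty$; for the remaining short range of $u$ I would use the trivial bound, whose contribution is a fixed power of $\log q$ and hence $o(x/\log x)$, since $\log a(q)/\log\log q\to\infty$ and $x\ge a(q)$ force $x$ to exceed every fixed power of $\log q$.
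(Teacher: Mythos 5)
Your proposal is correct and follows essentially the same route as the paper's proof: Dirichlet approximation with $R=(\log x)^{2+\epsilon}$, the Montgomery--Vaughan bound (Lemma \ref{MV}) on the minor arc $s\ge R$, and on the major arc $s<R$ the decomposition $n=dm$ by $d=(n,s)$ with character orthogonality modulo $s/d$ feeding Conjecture \ref{Burgess}. If anything your bookkeeping is more careful than the paper's --- which simply writes $\ll R\max_u|T(u)|\ll Rx/(\log x)^{3+\epsilon}$ and glosses over the $s$-dependent loss from the residues modulo $s/d$, the non-principality of the twists $\chi\psi$, and the restricted range of validity of the conjecture --- since you retain the factor $1+R/s$ from partial summation to offset $\varphi(s/d)\le s/d$, yielding $(s+R)x/(\log x)^{3+\epsilon}\ll x/\log x$, and you handle the small-$u$ range by trivial bounds.
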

\begin{proof}
Set $N= \lfloor x\rfloor$, $R=(\log N)^{2+\epsilon}$. Taking $q$ big enough, we easily obtain $2 \le R \le N$. By Dirichlet's theorem there exist integers $r$ and $s$, where $(r, s)=1$ and $ 1\le s \le N/R$, such that
\begin{equation*}
\left| \alpha - \frac{r}{s} \right| \le \frac{1}{sN/R}.
\end{equation*}
If $ s\ge R$, the result follows from Lemma \ref{MV}, since
\begin{align*}
\frac{N}{\log N}+N\frac{(\log R)^{\frac{3}{2}}}{\sqrt{R}} \ll_{\epsilon} \frac{x}{\log x}.
\end{align*}
Now suppose $s < R$. By partial summation~follows
\begin{align*}
\left| \sum_{n \le x} \chi(n) e(\alpha n) \right| \ll  \left( 1+ \left| \alpha - \frac{r}{s}\right| x\right)\max_{u \le x} |T(u)|  \ll R \max_{u \le x} |T(u)|,
\end{align*}
where 
\begin{equation*}
T(u)=\sum_{n \le u} \chi(n)e\left( \frac{rn}{s} \right).
\end{equation*}
By grouping the terms of the sum $T(u)$ according to the value of $(n,s)$, we get
\begin{align*}
T(u)&=\sum_{dt=s} \sum_{\substack{dm \le u \\ (m,t)=1}} \chi(md) e\left( \frac{rm}{t}\right)\\&= \sum_{dt=s} \chi(d) \sum_{\substack{1\le a \le t\\ (a,t)=1}} e\left( \frac{ra}{t}\right)\sum_{\substack{m \le u/d\\ m=a \pmod t}} \chi(m)\\&
=\sum_{dt=s} \frac{\chi(d)}{\varphi (t)} \sum_{\psi \pmod t}\sum_{1\le a \le t}  e\left( \frac{ra}{t}\right)\overline{\psi}(a)\sum_{m \le u/d} \chi(m) \psi(m).
\end{align*}
Applying Conjecture \ref{Burgess} to the right-hand sum we obtain
\begin{align*}
 \sum_{n \le x} \chi(n)  e(\alpha n)  \ll_{\epsilon} R \frac{x}{(\log x)^{3+\epsilon}}\ll_{\epsilon} \frac{x}{\log x}.
\end{align*}
\end{proof}
We can now prove Theorem \ref{theo:GRHB}.
\begin{proof}
We again use \eqref{eq:sum} and split the inner sum in two parts: $\Sigma_1$ with $0<|a|\le a(q)$ and $\Sigma_2$ with $a(q)<|a|<q/2$.\par
Now using that $\chi$ is even, by partial summation and Lemma \ref{PVconj}, we have
\begin{align*}
\Sigma_2 \ll  \log \frac{\log q}{\log a(q)}.
\end{align*}
We then note that
\begin{equation*}
 \Sigma_1= 2i\sum\limits_{1\le a \le a(q)}\frac{\overline{\chi(a)}\sin(\frac{2\pi aN}{q})}{a},
\end{equation*}
and thus from Lemma \ref{LI2} we obtain
\begin{equation*}
  \left|\Sigma_1 \right| \le
     2 \left( \frac{2}{\pi} \log a(q) +\frac{2}{\pi}\left( C +\log 2 +\frac{3}{a(q)}\right)\right).
\end{equation*}
We thus obtain the desired result raking $\log a(q)=(\log \log q)^{1+\epsilon}$. 
\end{proof}
In general, we can assume the following Burgess-like result.
\begin{conjecture}
\label{Burgess1}
For any non-principal character $\chi$ modulo $q$ and $x$ such that $x \gg l(q)$, for a fixed $l:\mathbb{R}^+\rightarrow\mathbb{R}^+$, we have
\begin{equation*}
\sum_{n\le x}\chi(n)\ll \frac{x}{c(x)},
\end{equation*}
 for a certain $c:\mathbb{N}\rightarrow\mathbb{N}$.
\end{conjecture}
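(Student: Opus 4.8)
The plan is to establish the stated bound $\sum_{n\le x}\chi(n)\ll x/c(x)$ by Burgess's amplification method, which is the standard source of such ``Burgess-like'' estimates. For suitably slow-growing $l$ and $c$ the bound is already contained in the P\'olya--Vinogradov inequality and in the classical Burgess estimate \cite{Burgess1, Burgess2}, so the genuine content is to recover the subconvex form with, say, $l(q)=q^{1/4+\delta}$ and $c$ a fixed power of $x$. I would first treat the case of prime modulus, writing $\chi$ for a non-principal character modulo a prime $p$, and defer the passage to general $q$ to the end.

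The first key step is Burgess's shifting argument. For parameters $A,B$ with $AB\le x$ one compares $\sum_{n\le x}\chi(n)$ with the averaged shifted sum $\frac{1}{AB}\sum_{a\le A}\sum_{b\le B}\sum_{n\le x}\chi(n+ab)$, the two differing by $O(AB)$ on average. Using complete multiplicativity one factors $\chi(n+ab)=\chi(a)\,\chi(b+n\overline a)$, where $\overline a$ is the inverse of $a$ modulo $p$; grouping the remaining sum according to the residue class $v\equiv n\overline a \pmod p$ reduces the problem to bounding a weighted sum $\sum_{v \bmod p}\nu(v)\,\bigl|\sum_{b\le B}\chi(v+b)\bigr|$, where $\nu(v)$ counts the representations of $v$ and obeys good moment bounds.

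The crux is then a H\"older estimate with a high even exponent $2r$, which transfers the problem to the complete moment $\sum_{v \bmod p}\bigl|\sum_{b\le B}\chi(v+b)\bigr|^{2r}$. This moment is controlled by the Weil bound (the Riemann Hypothesis for curves over finite fields): expanding the $2r$-th power and summing $v$ over the full residue system yields complete character sums of the shape $\sum_{v}\chi\bigl((v+b_1)\cdots(v+b_r)\overline{(v+b_{r+1})\cdots(v+b_{2r})}\bigr)$, each of size $O(r\sqrt p)$ unless the shifts pair off, which gives a bound $\ll_r B^r p + B^{2r}p^{1/2}$. After optimizing $r$, $A$ and $B$ one obtains $\sum_{n\le x}\chi(n)\ll x^{1-1/r}\,p^{(r+1)/(4r^2)+\epsilon}$, which for $x\ge p^{1/4+\delta}$ is $\ll x/c(x)$ with $c$ a fixed power of $x$, as required.

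The main obstacle is precisely this moment estimate: it carries the depth of the argument, resting on the Weil bound, and it is the step that fixes both the admissible range $l(q)$ and the saving $c$. A second, softer difficulty is the reduction from prime to general modulus $q$, where the shifting method degrades on divisors of $q$. I would handle the general form most cleanly through the Granville--Soundararajan pretentious method \cite{G-S1, G-S2}, bounding $\frac1x\sum_{n\le x}\chi(n)$ via Hal\'asz's theorem in terms of the distance $\mathbb{D}(\chi,n^{it};x)$ and using that a non-principal character cannot pretend to be $n^{it}$ over a long range (a consequence of the zero-free region together with P\'olya--Vinogradov), which yields the flexible $l,c$ form of the statement directly.
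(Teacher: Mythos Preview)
This statement is a \emph{conjecture} in the paper, not a theorem: the paper offers no proof of it and none is intended. It is introduced as a generic Burgess-like hypothesis with unspecified functions $l$ and $c$, to be \emph{assumed} in Lemma~\ref{PVconj1} and the theorem following it. So there is no ``paper's own proof'' to compare your attempt against, and the appropriate response to this item is simply that no proof is required.

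Your write-up, while a competent sketch of Burgess's amplification method, therefore misreads the role of the statement. As written, Conjecture~\ref{Burgess1} is deliberately schematic: for trivial choices (say $c(x)=1$) it is vacuous, and for ambitious choices (say $l(q)=q^{o(1)}$ with $c(x)\to\infty$) it is well beyond Burgess and indeed beyond anything known unconditionally. Your argument only recovers the classical Burgess range $l(q)=q^{1/4+\delta}$ with a power saving, which is one admissible instantiation but not a proof of the conjecture in the generality stated; and your final paragraph's appeal to Hal\'asz and zero-free regions does not yield arbitrary $l,c$ either. In short: nothing is wrong with your mathematics, but you have supplied a proof where the paper asks for none, and what you prove is a special case rather than the (intentionally open-ended) statement itself.
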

In the same way as in the proof of Lemma \ref{PVconj}, assuming Conjecture \ref{Burgess1}, we prove the following result.
\begin{lemma}
\label{PVconj1}
Assuming Conjecture \ref{Burgess1} holds for specific $l$ and $c$, then we have the following result.
Take $a:\mathbb{R}^+\rightarrow\mathbb{R}^+$ such that $a(q)\le x \le q$ and with $x$ such that $2 \le R(x) \le a(q)$. We have, uniformly for all primitive characters $\chi$  modulo $q$ as above and $x\gg l(q)$,
\begin{align*}
\left| \sum_{n \le x} \chi(n) e(\alpha n) \right| \ll x\max \left(\frac{1}{\log x}+\frac{(\log R (x))^{3/2}}{\sqrt{R(x)}},\frac{R(x)}{c(x)} \right)
\end{align*}
\end{lemma}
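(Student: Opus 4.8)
The plan is to follow the proof of Lemma~\ref{PVconj} essentially verbatim, replacing the quantitative input from Conjecture~\ref{Burgess} by the more general Conjecture~\ref{Burgess1} and carrying the two resulting error terms separately so that they assemble into the stated maximum. First I would set $N=\lfloor x\rfloor$ and abbreviate $R=R(x)$; since $2\le R(x)\le a(q)\le x$ we have $2\le R\le N$ for $q$ large. By Dirichlet's theorem there are coprime integers $r,s$ with $1\le s\le N/R$ and $|\alpha-r/s|\le R/(sN)$. The argument then splits according to whether $s$ is large or small relative to $R$, and each branch produces one of the two terms in the final maximum.

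In the regime $s\ge R$ I would invoke Lemma~\ref{MV} with $s$ in the role of its modulus. The hypotheses hold: $|\alpha-r/s|\le R/(sN)\le s^{-2}$ because $s\le N/R$, and the chain $2\le R\le s\le N/R$ is exactly what Lemma~\ref{MV} requires. This gives $S(\alpha)\ll N/\log N + N(\log R)^{3/2}/\sqrt{R}$, which is $\ll x\big(1/\log x + (\log R(x))^{3/2}/\sqrt{R(x)}\big)$ and accounts for the first term inside the maximum.

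In the regime $s<R$ I would use partial summation to write $|S(\alpha)|\ll (1+|\alpha-r/s|x)\max_{u\le x}|T(u)|\ll R\max_{u\le x}|T(u)|$, where $T(u)=\sum_{n\le u}\chi(n)e(rn/s)$ and the factor $R$ comes from $|\alpha-r/s|x\le Rx/(sN)\ll R$. I would then group the terms of $T(u)$ by $d=(n,s)$, detect the residue class $m\equiv a\pmod t$ with $t=s/d$ via orthogonality of characters modulo $t$, and pull out the resulting Gauss sums $\sum_{a\pmod t}e(ra/t)\overline{\psi}(a)$, reducing $T(u)$ to a linear combination of the inner sums $\sum_{m\le u/d}\chi(m)\psi(m)$ over $\psi\pmod t$. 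Applying Conjecture~\ref{Burgess1} to these character sums (legitimate in the relevant range $u/d\gg l(q)$) gives $\sum_{m\le u/d}\chi(m)\psi(m)\ll (u/d)/c(u/d)$, and after collecting the auxiliary factors this yields $\max_{u\le x}|T(u)|\ll x/c(x)$, so the whole regime contributes $\ll xR(x)/c(x)$, the second term inside the maximum. Combining the two regimes gives the claimed bound.

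The main obstacle I anticipate lies in the bookkeeping of the $s<R$ case rather than in any new idea. One must check that each product $\chi\psi$ is genuinely non-principal, so that Conjecture~\ref{Burgess1} applies, and that the argument $u/d$ stays in the range $u/d\gg l(q)$ uniformly over the $u\le x$ that dominate the maximum. More delicate is that the character decomposition introduces Gauss sums of size up to $\sqrt{t}$ together with a divisor sum over $dt=s$; taken naively these contribute an extra factor of roughly $\sqrt{R(x)}$ times a divisor factor, which would inflate the clean term $R(x)/c(x)$. Matching the stated bound therefore requires either exploiting additional cancellation in these Gauss sums after the $1/\varphi(t)$ normalisation, or absorbing the surplus into the implied constant and the behaviour of $c$, so that the factor $R(x)$ coming from the partial summation is the only genuine loss recorded in the final estimate.
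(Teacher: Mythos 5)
Your proposal follows essentially the paper's own (implicit) proof: the paper gives no more detail than that Lemma \ref{PVconj1} follows ``in the same way as in the proof of Lemma \ref{PVconj}'', and that is precisely the argument you reproduce, with the fixed choice $R=(\log N)^{2+\epsilon}$ replaced by the given function $R(x)$ and Conjecture \ref{Burgess} replaced by Conjecture \ref{Burgess1}, the two regimes $s\ge R$ and $s<R$ yielding the two terms in the maximum. The Gauss-sum and divisor-factor inflation you flag at the end is a genuine concern, but it is equally present and equally unaddressed in the paper's own proof of Lemma \ref{PVconj} (which likewise applies the conjectural bound to $\sum_{m\le u/d}\chi(m)\psi(m)$ and records only the factor $R$ from partial summation), so it does not distinguish your argument from the paper's.
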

In the same way as in the proof of Theorem \ref{theo:GRHB}, using Lemma \ref{PVconj1}, we can prove the following more general result.
\begin{theorem}
Assuming Conjecture \ref{Burgess1} holds for specific $l$ and $c$, then we have the following result.
We have, uniformly for all even primitive characters $\chi$  modulo $q$ as above, $2 \le R(x) \le a(q)$ and $ l(q)\le a(q)$,
\begin{align*}
\left| \sum_{n \le x} \chi(n) \right| \ll  \log a(q) + \int_{a(q)}^q\frac{\max (x)}{x} dx,
\end{align*}
where
\begin{align*}
\max (x)=\max \left(\frac{1}{\log x},\frac{(\log R (x))^{3/2}}{\sqrt{R(x)}},\frac{R(x)}{c(x)} \right).
\end{align*}
\end{theorem}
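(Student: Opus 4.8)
The plan is to run the proof of Theorem \ref{theo:GRHB} essentially verbatim, replacing the single-scale input Lemma \ref{PVconj} by the scale-dependent input Lemma \ref{PVconj1}; since the latter bound varies with $x$, the partial summation that in Theorem \ref{theo:GRHB} collapses to a single logarithm now leaves behind an integral over the scales. First I would start from \eqref{eq:sum}, so that estimating $\sum_{n \le N}\chi(n)$ (here $N$ is the quantity written $x$ in the statement) reduces, up to the factor $\sqrt{q}$ and an $O(\sqrt{q})$ error, to the dual sum $\sum_{0 < |a| < q/2}\frac{\overline{\chi(a)}(e(aN/q)-1)}{a}$. Using that $\chi$ is even, pairing $a$ with $-a$ turns this into $2i\sum_{1 \le a < q/2}\frac{\overline{\chi(a)}\sin(2\pi aN/q)}{a}$, which I split at the threshold $a(q)$ into $\Sigma_1$ over $1 \le a \le a(q)$ and $\Sigma_2$ over $a(q) < a < q/2$.

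The term $\Sigma_1$ is handled exactly as in Theorem \ref{theo:GRHB}: Lemma \ref{LI2} applied to $\sum_{a \le a(q)}\frac{|\sin(2\pi aN/q)|}{a}$ gives $|\Sigma_1| \ll \log a(q)$, producing the first term of the bound.

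The substantive step is $\Sigma_2$. Writing $\sin(2\pi aN/q) = \mathrm{Im}\,e(aN/q)$, the partial sums $A(t) = \sum_{a \le t}\overline{\chi(a)}e(aN/q)$ are precisely the exponential sums estimated by Lemma \ref{PVconj1} (applied to $\overline{\chi}$ with $\alpha = N/q$), so that $|A(t)| \ll t\,\max(t)$, where $\max(t)$ is the quantity in the statement; here one uses that $\max(u+v,w) \asymp \max(u,v,w)$ to reconcile the grouping in Lemma \ref{PVconj1} with the definition of $\max(\cdot)$ given in the theorem. Feeding this into Abel summation applied to $\Sigma_2 = 2i\sum_{a(q) < a < q/2}\frac{\overline{\chi(a)}\sin(2\pi aN/q)}{a}$, the main contribution is $\int_{a(q)}^{q/2}\frac{|A(t)|}{t^2}\,dt \ll \int_{a(q)}^{q}\frac{\max(x)}{x}\,dx$, the second term of the bound, while the endpoint contributions are $O(\max(a(q)))$ and $O(\max(q/2))$. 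The hypotheses $2 \le R(x) \le a(q)$ and $l(q) \le a(q)$ are exactly what make Lemma \ref{PVconj1}, and hence Conjecture \ref{Burgess1}, applicable at every scale $t \in [a(q), q/2]$ arising in the partial summation.

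The main obstacle I expect is purely a matter of bookkeeping with the $\max(\cdot)$ weight. One must confirm that the two endpoint terms $O(\max(a(q)))$ and $O(\max(q/2))$ are genuinely dominated by $\log a(q) + \int_{a(q)}^q \frac{\max(x)}{x}\,dx$; this needs mild regularity of $\max(\cdot)$ (so that, for instance, $\int_{q/2}^q \frac{\max(x)}{x}\,dx \asymp \max(q)$ absorbs the term at $t = q/2$), and one should also check that no scale $t$ in the partial summation leaves the admissible window where $R(t) \le a(q)$ and $t \gg l(q)$ hold. None of this is conceptually difficult, but it is where the stated hypotheses on $R$, $l$ and $a$ must be used carefully.
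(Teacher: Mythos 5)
Your proposal follows exactly the paper's intended argument: the paper gives no separate proof of this theorem, stating only that it follows ``in the same way as in the proof of Theorem \ref{theo:GRHB}, using Lemma \ref{PVconj1}'', which is precisely the substitute-the-lemma-and-redo-the-partial-summation argument you carry out, with the scale-dependent bound $|A(t)|\ll t\max(t)$ producing the integral $\int_{a(q)}^{q}\frac{\max(x)}{x}\,dx$ in place of the single logarithm. Your extra bookkeeping concerning the endpoint terms and the admissible window of scales is a sound elaboration of details the paper leaves entirely implicit.
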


\section{Hildebrand's version of Burgess bound}
\label{sec:4}
Theorem \ref{theo:main} is made possible by the following improvement of Lemma 4 in \cite{Hild_W}.
\begin{lemma}
\label{lemma:diff}
Let $f$ be a real multiplicative function satisfying $-1 \le f \le 1$, and let
\begin{equation*}
M(x)=M(x,f)=\frac{1}{x}\sum_{n\le x}f(x).
\end{equation*}
Then for $3\le x \le x'\le x^{\mathcal{O}(1)}$ we have
\begin{equation*}
\left| M(x')-M(x) \right|\ll \left( \frac{\log (2x'/x)}{\log x'}\right)^{1/2-\epsilon_1},
\end{equation*}
for any $\epsilon_1>0$ and where the implied constant is uniform. 
\end{lemma}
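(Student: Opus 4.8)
The plan is to prove this as a Lipschitz/continuity estimate for the mean value of a real multiplicative function, working inside the pretentious framework of Halász and Granville--Soundararajan and following the structure of Hildebrand's Lemma~4 in \cite{Hild_W}, the only change being an improved input that exploits the hypothesis that $f$ is real. Throughout I would write $\mathbb{D}(f,g;x)^2=\sum_{p\le x}\frac{1-\mathrm{Re}(f(p)\overline{g(p)})}{p}$ for the pretentious distance and let $t_0=t_0(x)$ be the frequency $|t|\le\log x$ minimizing $D(x):=\mathbb{D}(f,n^{it};x)^2$. Quantitative Halász gives the magnitude bound $|M(x)|\ll(1+D(x))e^{-D(x)}+1/\log x$, together with the more precise fact that, up to an admissible error, $M(x)$ is a main term of the shape $\frac{x^{it_0}}{1+it_0}$ times a slowly varying Euler factor of size $\asymp e^{-D(x)}$. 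Differencing this representation at $x$ and at $x'$ splits $M(x')-M(x)$ into a \emph{rotation} part, of size $\ll e^{-D}\,|t_0|\log(x'/x)$, and an \emph{Euler-factor} part, governed by the distance $\sum_{x<p\le x'}\frac{1-\mathrm{Re}(f(p)p^{-it_0})}{p}$ accumulated by the primes in the short interval.

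First I would record that, since $\log x\asymp\log x'$ on the range $x\le x'\le x^{O(1)}$, the short-interval prime sum is $\ll L$, where $L:=\log(2x'/x)/\log x'$; the factor $2$ in $2x'/x$ guarantees $L\gg1/\log x'$, so no degeneracy occurs as $x'\downarrow x$. The Euler-factor part is therefore harmless, contributing $\ll e^{-D}L$. The entire difficulty is the rotation part, which is exactly the term that obstructs an analogous clean bound for general complex $f$.

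The key step, and the place where realness enters, is the control of $t_0$. Because $f=\overline f$, one has $\mathbb{D}(f,n^{it};x)=\mathbb{D}(f,n^{-it};x)$ for every $t$, so the minimum $D$ is attained symmetrically at $\pm t_0$. The triangle inequality for the pretentious distance then gives $\mathbb{D}(1,n^{2it_0};x)\le 2\sqrt{D}$, and since $\mathbb{D}(1,n^{2it_0};x)^2=\sum_{p\le x}\frac{1-\cos(2t_0\log p)}{p}=\log(|t_0|\log x)+O(1)$ in the relevant range, this forces $|t_0|\log x\ll e^{O(D)}$. In particular, when $D$ is bounded the optimal frequency satisfies $|t_0|\ll1/\log x$, so the rotation $|t_0|\log(x'/x)\ll L$ is itself negligible; for complex $f$ no such constraint exists, and it is precisely this loss that degrades the exponent. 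Feeding the bound on $|t_0|$ into the rotation term and optimizing the resulting estimate against the Halász magnitude bound $e^{-D}(1+D)$, the two contributions balancing at $D\asymp\log(1/L)$, is designed to produce $|M(x')-M(x)|\ll L^{1/2}\bigl(\log(1/L)\bigr)^{O(1)}$, which is absorbed into $L^{1/2-\epsilon_1}$ for any fixed $\epsilon_1>0$.

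I expect the optimization in the last step to be the main obstacle: one must track the precise dependence of the rotation rate on $D$, equivalently the constant in $|t_0|\log x\ll e^{O(D)}$, finely enough to land at the exponent $1/2$ rather than something smaller, and this is where the real-valued symmetry must be used at full strength rather than only through the lossy triangle inequality. Tellingly, the extremal real $f$ that saturates the frequency constraint, $f(p)\approx\cos(t_0\log p)$, gives $\mathbb{D}(1,n^{2it_0};x)^2\approx 2D$, which is exactly the relation needed to push the exponent to $1/2$. The remaining work is routine bookkeeping: propagating the errors $O(1/\log x)$ uniformly, verifying the Euler-factor representation of $M$ on the whole range $3\le x\le x'\le x^{O(1)}$, and checking that general $f$ with $|f|\le1$ (not only $|f(p)|=1$) is handled by the same distances. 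As a fallback, one can transcribe Hildebrand's proof of \cite[Lemma~4]{Hild_W} and replace only the step bounding the contribution of the nonzero frequencies, invoking $\mathbb{D}(f,n^{it};x)=\mathbb{D}(f,n^{-it};x)$ to extract the improved exponent.
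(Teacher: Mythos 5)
Your route is genuinely different from the paper's. The paper argues by a dichotomy: setting $L=\log(2x'/x)/\log x'$ and $R=(\tfrac12+\epsilon)\log(1/L)$, it bounds $|M(x)|$ and $|M(x')|$ \emph{individually} by a Hal\'asz-type bound for real functions (Theorem \ref{theo:H-T}) when $\sum_{p\le x'}(1-f(p))/p\ge R$, and otherwise invokes the Granville--Soundararajan Lipschitz estimate (Lemma \ref{lemma:G_S}), whose factor $\exp\bigl(\sum_{p\le x'}(1-f(p))/p\bigr)\le e^{R}=L^{-1/2-\epsilon}$ is then affordable; you instead difference a Hal\'asz main-term representation and try to control the minimizing frequency $t_0$. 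But the step you yourself flag as ``the main obstacle'' is fatal, and it cannot be repaired. Quantitatively: under a constraint $|t_0|\log x\ll e^{cD}$, your balancing yields $|M(x')-M(x)|\ll L^{1/c}$ up to logarithms. The triangle inequality $\mathbb{D}(1,n^{2it_0};x)\le 2\mathbb{D}(f,n^{it_0};x)$ gives only $c=4$, i.e.\ the exponent $1/4$; to reach $1/2$ you would need $\mathbb{D}(1,n^{2it_0};x)^2\le 2D+O(1)$ for every real $f$, i.e.\ $c=2$. That inequality is false, and in fact the lemma itself is false.

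Consider the completely multiplicative real function $f(p)=\mathrm{sgn}(\cos(\log p))$, so $-1\le f\le 1$. From the Fourier expansion $\mathrm{sgn}(\cos\theta)=\tfrac{4}{\pi}\bigl(\cos\theta-\tfrac13\cos 3\theta+\cdots\bigr)$ one gets $\sum_p f(p)p^{-s}=\tfrac{2}{\pi}\log\tfrac{1}{s-1-i}+O(1)$ near $s=1+i$ (and symmetrically at $s=1-i$), so Selberg--Delange gives $M(x)=B(\log x)^{2/\pi-1}\cos(\log x+\beta)+o\bigl((\log x)^{2/\pi-1}\bigr)$ with $B>0$. Taking $x'=ex$, so that $L\asymp 1/\log x$, we get $|M(x')-M(x)|\gg(\log x)^{2/\pi-1}\asymp L^{1-2/\pi}$ for all $x$ in a suitable arithmetic progression of $\log x$; since $1-2/\pi\approx 0.363<1/2$, this contradicts the asserted bound $\ll L^{1/2-\epsilon_1}$ for every $\epsilon_1<\tfrac{2}{\pi}-\tfrac12$. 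The same function shows your hoped-for constant is unreachable: here the minimum over $\gamma$ is attained at $\gamma=\pm 1$ and equals $D=\sum_{p\le x}(1-|\cos\log p|)/p=(1-\tfrac{2}{\pi})\log\log x+O(1)$, while $\mathbb{D}(1,n^{2it_0};x)^2=\log\log x+O(1)$, so the best admissible $c$ is $1/(1-2/\pi)\approx 2.75$, which feeds back precisely the exponent $1-2/\pi$ of \cite[Corollary 3]{G-S1}: realness alone buys nothing. This is not a defect of your method relative to the paper's, because the paper's proof breaks at the same point: Theorem \ref{theo:H-T} asserts that for real $f$ the Hal\'asz minimum equals $\min\{\log\log x+O(1),\sum_{p\le x}(1-f(p))/p\}$, yet for this $f$ that expression is $\log\log x+O(1)$ while the true minimum is $(1-\tfrac{2}{\pi})\log\log x+O(1)$. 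The extremal function in \cite[\S 9]{G-S2} is indeed not a Dirichlet character, but it \emph{is} real; so any correct strengthening must use more about $\chi$ than realness (for instance that its values lie in $\{-1,0,1\}$), and Lemma \ref{lemma:diff}, stated for all real $f$, cannot be proved by your argument, the paper's, or any other.
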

Before focusing on Lemma \ref{lemma:diff} we will use it to prove Theorem \ref{theo:main}.
\begin{proof} Applying Lemma \ref{lemma:diff} with $f=\chi$ (a non-principal real character modulo a prime $p$), $x=N\ge p^{1/4-\delta}$ and $x'=Np^{2\delta}(\ge p^{1/4+\delta})$, we obtain
\begin{equation*}
M(N,\chi)=M(Np^{2\delta},\chi)+\mathcal{O}\left(\delta^{1/2-\epsilon_1}\right).
\end{equation*}
The result now follows bounding $M(Np^{2\delta},\chi)$ using Burgess estimate.
\end{proof}
We are now left with proving Lemma \ref{lemma:diff}. The proof is based on two results. First we need the version of Hal\'{a}sz result given by Tenenbaum in \cite[pag. 343]{ten} which, defining 
\begin{equation*}
S(x,T):=\min_{|\gamma|\le 2T} \sum_{p\le x}\frac{1-\mathbf{R}\left(f(p)p^{-i\gamma}\right)}{p},
\end{equation*}
gives for any multiplicative function $|f(n)|\le 1$, $x\ge 3$ and $T\ge 1$
\begin{equation}
\label{eq:H}
M(x,f)\le (1+S(x,T))e^{-S(x,T)}+\frac{1}{\sqrt{T}}.
\end{equation}
This gives the first of the two results that we need.
\begin{theorem}
\label{theo:H-T}
For all real multiplicative functions $f$ with $|f|\le 1$, all $x\ge 3$ and with
\begin{equation*}
S'(x,t):=\min \left\{ \log \log x+O(1),  \sum_{p\le x}\frac{1-f(p)}{p}\right\}
\end{equation*}
we have
\begin{equation*}
S(x,f)\ll  (1+S'(x,t))\exp\left(- S'(x,t)\right).
\end{equation*}
\end{theorem}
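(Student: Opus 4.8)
The plan is to read the bound off the Halász--Tenenbaum estimate \eqref{eq:H}, whose left-hand side is the mean value $M(x,f)$ that the theorem controls, once one has a sufficiently strong lower bound for the minimising-frequency quantity $S(x,T)$. I would fix the free parameter at $T=(\log x)^{2}$, so that the tail is $T^{-1/2}=(\log x)^{-1}$; since $S'(x,t)\le\log\log x+O(1)$ by definition, one has $(1+S'(x,t))e^{-S'(x,t)}\gg(\log x)^{-1}$, so this tail is automatically absorbed into the target. As $u\mapsto(1+u)e^{-u}$ is decreasing on $[0,\infty)$, the theorem then reduces to the single inequality
\begin{equation*}
S(x,T)\ge S'(x,t)-O(1).
\end{equation*}
Writing $\D(f,g;x)^{2}=\sum_{p\le x}\frac{1-\mathbf{R}(f(p)\overline{g(p)})}{p}$, this says that $\min_{|\gamma|\le 2T}\D(f,n^{i\gamma};x)^{2}$ is, up to $O(1)$, at least $\min\{\log\log x,\ \D(f,1;x)^{2}\}$, where $\D(f,1;x)^{2}=\sum_{p\le x}\frac{1-f(p)}{p}$ is the value at $\gamma=0$.

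The decisive structural input is that $f$ is real. Then $\mathbf{R}(f(p)p^{-i\gamma})=f(p)\cos(\gamma\log p)=\mathbf{R}(f(p)p^{i\gamma})$, so $\D(f,n^{i\gamma};x)=\D(f,n^{-i\gamma};x)$ for every $\gamma$. Letting $\gamma_{0}$ attain $S(x,T)$, the triangle inequality on the triple $n^{i\gamma_{0}},\,f,\,n^{-i\gamma_{0}}$ together with this reflection symmetry gives
\begin{equation*}
\D(1,n^{2i\gamma_{0}};x)\le \D(n^{i\gamma_{0}},f;x)+\D(f,n^{-i\gamma_{0}};x)=2\sqrt{S(x,T)}.
\end{equation*}
This is exactly the inequality that is unavailable for complex $f$: a complex function may genuinely pretend to be $n^{i\gamma_{0}}$ with $\gamma_{0}\neq0$ and so keep $S(x,T)$ small, whereas realness forces the \emph{doubled} frequency $2\gamma_{0}$ to be essentially non-pretentious, and this is what lets a nonzero minimiser be traded back for the honest distance to $1$.

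I would then split on the size of $\gamma_{0}$, using the Mertens-type evaluation: $\sum_{p\le x}\frac{1-\cos(\gamma\log p)}{p}=O(1)$ for $|\gamma|\le 1/\log x$, equals $\log(|\gamma|\log x)+O(1)$ for $1/\log x\le|\gamma|\le 1$, and equals $\log\log x+O(1)$ for $|\gamma|\ge 1$, together with $\sum_{p\le x}\frac1p=\log\log x+O(1)$. In the low-frequency regime $|\gamma_{0}|\le 1/\log x$ the direct comparison
\begin{equation*}
\bigl|\D(f,n^{i\gamma_{0}};x)^{2}-\D(f,1;x)^{2}\bigr|\le\sum_{p\le x}\frac{1-\cos(\gamma_{0}\log p)}{p}=O(1)
\end{equation*}
(valid since $|f(p)|\le1$) already yields $S(x,T)\ge\sum_{p\le x}\frac{1-f(p)}{p}-O(1)\ge S'(x,t)-O(1)$ with the \emph{full} constant. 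In the high-frequency regime $|\gamma_{0}|>1/\log x$ the reflection bound of the previous paragraph gives $S(x,T)\gg\log(|\gamma_{0}|\log x)$, while the same direct comparison, now with error $\log(|\gamma_{0}|\log x)$, gives $S(x,T)\ge\sum_{p\le x}\frac{1-f(p)}{p}-\log(|\gamma_{0}|\log x)-O(1)$; these two estimates already pin $S(x,T)$ against $\min\{\log\log x,\ \sum_{p\le x}\frac{1-f(p)}{p}\}$ up to a constant factor.

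The genuinely delicate point, and the reason a careless argument only reaches the halved exponent $e^{-S'/2}$, is to obtain the full constant in the high-frequency regime: the triangle step $\D(1,n^{2i\gamma_{0}};x)\le2\sqrt{S(x,T)}$ is lossy, and recovering all of $S'(x,t)$ rather than a fixed fraction of it requires controlling how strongly $f$ can correlate with $1$ and with $n^{2i\gamma_{0}}$ simultaneously. I would handle this by working directly with the identity $S(x,T)=\sum_{p\le x}\frac1p-\max_{|\gamma|\le 2T}\mathbf{R}\sum_{p\le x}\frac{f(p)p^{-i\gamma}}{p}$ and bounding the maximal correlation through the reality of $f$ and the explicit evaluation of $\sum_{p\le x}\frac{\cos(\gamma\log p)}{p}$; this maximal-correlation estimate is where the main effort lies. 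Once $S(x,T)\ge S'(x,t)-O(1)$ is secured, the monotonicity of $(1+u)e^{-u}$ and the choice $T=(\log x)^{2}$ close the argument and deliver the stated bound with exponent $-S'$ exactly.
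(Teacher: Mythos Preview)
The paper's argument is a single sentence: it asserts that for real $f$ the minimising quantity $S(x,T)$ coincides with $S'(x,t)$, and then reads the bound off \eqref{eq:H} directly. Your plan is far more elaborate: you fix $T=(\log x)^{2}$, reduce to the lower bound $S(x,T)\ge S'(x,t)-O(1)$, and attack this via the reflection symmetry $\D(f,n^{i\gamma};x)=\D(f,n^{-i\gamma};x)$ and the pretentious triangle inequality, splitting on the size of the minimiser $\gamma_{0}$. So the routes are genuinely different in level of detail, and you correctly isolate the point the paper skips: getting the \emph{full} constant, not just $S(x,T)\ge cS'(x,t)-O(1)$ for some $c<1$.

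The gap in your proposal is exactly there, and it is real. Your reflection/triangle step yields only $4S(x,T)\ge \D(1,n^{2i\gamma_{0}};x)^{2}$, hence at best $S(x,T)\ge\tfrac14\log\log x-O(1)$ in the high-frequency case; you then write that recovering the full constant ``is where the main effort lies'' and gesture at a maximal-correlation estimate without carrying it out. In fact this step cannot be completed as stated. Take $f$ completely multiplicative with $f(p)=\cos(\gamma_{0}\log p)$ for a fixed $\gamma_{0}\neq 0$, say $\gamma_{0}=1$; this is real with $|f|\le 1$. Then
\[
\sum_{p\le x}\frac{1-f(p)}{p}=\sum_{p\le x}\frac{1-\cos(\log p)}{p}=\log\log x+O(1),
\]
so $S'(x,t)=\log\log x+O(1)$. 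But evaluating the Hal\'asz distance at $\gamma=\gamma_{0}$ gives
\[
\sum_{p\le x}\frac{1-\cos^{2}(\log p)}{p}=\frac12\sum_{p\le x}\frac{1-\cos(2\log p)}{p}=\frac12\log\log x+O(1),
\]
so $S(x,T)\le\tfrac12\log\log x+O(1)$. Thus $S(x,T)\ge S'(x,t)-O(1)$ fails outright for this $f$, and the identity the paper asserts is not correct either. The Hal\'asz route \eqref{eq:H} therefore cannot produce the exponent $-S'$; it gives at best $M(x,f)\ll(1+S')e^{-cS'}$ with $c\le\tfrac12$, consistent with the fact that for this particular $f$ one has $M(x,f)$ of order $(\log x)^{-1/2}$ (its Dirichlet series behaves like $\zeta(s-i)^{1/2}\zeta(s+i)^{1/2}$ near $\mathrm{Re}\,s=1$), not $(\log x)^{-1}$. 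Your caution about the ``halved exponent'' is thus not a technicality to be overcome but the actual truth of the matter.
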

\begin{proof}
The result follows from \eqref{eq:H}, observing that for $f(n)$ real we have
\begin{equation*}
S(x,T)=\min \left\{ \log \log x+O(1),  \sum_{p\le x}\frac{1-f(p)}{p}\right\}.
\end{equation*}
\end{proof}
The second is Proposition 4.1. in \cite{G-S}.
\begin{lemma}
\label{lemma:G_S}
For all real multiplicative functions $f$ with $-1 \le f\le 1$. Let $x$ be large, $1\le x\le x'$. Then
\begin{equation*}
\left| M(x')-M(x) \right|\ll \frac{\log 2 x'/x}{\log x'}\exp \left( \sum_{p\le x'}\frac{1-f(p)}{p} \right).
\end{equation*}
\end{lemma}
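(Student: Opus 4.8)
The plan is to reduce the estimate to the convolution identity for $f\log$ and then to an induction on scale. Write $S(t)=\sum_{n\le t}f(n)$, so $M(t)=S(t)/t$, and let $\Lambda_f$ be the unique function supported on prime powers for which $f\log=f\ast\Lambda_f$; here $\Lambda_f(p)=f(p)\log p$ and $|\Lambda_f(p^k)|\ll k\log p$, so the terms with $k\ge2$ contribute only $O(1)$ to any of the prime-power sums below. Partial summation gives $\sum_{n\le y}f(n)\log(y/n)=\int_1^y M(t)\,dt$, and writing $\log y=\log(y/n)+\log n$ together with $\sum_{n\le y}f(n)\log n=\sum_{b\le y}\Lambda_f(b)S(y/b)$ yields the fundamental identity
\begin{equation*}
S(y)\log y=\int_1^y M(t)\,dt+\sum_{b\le y}\Lambda_f(b)\,S(y/b),\qquad y\ge1 .
\end{equation*}
Dividing by $y$ and abbreviating $L(y)/y:=\sum_{b\le y}\tfrac{\Lambda_f(b)}{b}M(y/b)$, this reads $M(y)\log y=\tfrac1y\int_1^yM+L(y)/y$.

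Next I would subtract this identity at $y=x'$ and $y=x$. Using $M(x')\log x'-M(x)\log x=(M(x')-M(x))\log x'+M(x)\log(x'/x)$ and solving for the difference,
\begin{equation*}
(M(x')-M(x))\log x'=-M(x)\log\frac{x'}{x}+\Big(\tfrac1{x'}\!\int_1^{x'}\!M-\tfrac1x\!\int_1^x\!M\Big)+\Big(\tfrac{L(x')}{x'}-\tfrac{L(x)}{x}\Big).
\end{equation*}
The first two bracketed terms are each $\ll\log(2x'/x)$ using only $|M|\le1$: for the integral term write $I(y)=\int_1^yM$ and split $\tfrac{I(x')}{x'}-\tfrac{I(x)}{x}=\tfrac1{x'}\int_x^{x'}M+I(x)(\tfrac1{x'}-\tfrac1x)$. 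Since the claimed right-hand factor $\exp(\sum_{p\le x'}\tfrac{1-f(p)}{p})\ge1$, after dividing by $\log x'$ these pieces are dominated by the target bound. So everything reduces to controlling $\tfrac{L(x')}{x'}-\tfrac{L(x)}{x}$.

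Here I would split $\tfrac{L(x')}{x'}-\tfrac{L(x)}{x}=\sum_{x<b\le x'}\tfrac{\Lambda_f(b)}{b}M(x'/b)+\sum_{b\le x}\tfrac{\Lambda_f(b)}{b}\big(M(x'/b)-M(x/b)\big)$. The first sum is $\ll\log(2x'/x)$ by Mertens' estimate $\sum_{x<p\le x'}\tfrac{\log p}{p}\ll\log(x'/x)$ (the $k\ge2$ prime powers giving $O(1)$). The second sum is the heart of the matter: every difference $M(x'/b)-M(x/b)$ has the same multiplicative gap $x'/x$ as the one we are estimating, so it is a signed, $\Lambda_f$-weighted combination of the same quantity at the smaller scales $x/b$, and I would close the argument by induction on the scale, with the trivial bound $|M|\le1$ at the smallest scales serving as the base case.

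I expect the main obstacle to lie precisely in solving this renewal-type inequality with the correct exponent. Two points are delicate. First, one must keep the signs of $\Lambda_f(p)=f(p)\log p$ intact rather than bounding by $|\Lambda_f|$: replacing $f(p)$ by $|f(p)|$ would produce $\exp(\sum\tfrac{1-|f(p)|}{p})$ instead of the sharper $\exp(\sum\tfrac{1-f(p)}{p})$, so the deficiency $1-f(p)$ must be propagated through the iteration. Second, the weight attached to each prime in the iteration is $\tfrac{\log p}{p}$ coming from $\Lambda_f$, while the scale normalization contributes a factor $1/\log(\text{scale})$; organizing the induction so that these cancel converts the log-weighted prime sum $\sum_{p\le x'}\tfrac{(1-f(p))\log p}{p}$ into the unweighted $\sum_{p\le x'}\tfrac{1-f(p)}{p}$ in the exponent, via Mertens' theorem $\sum_{p\le y}\tfrac{\log p}{p}=\log y+O(1)$ and partial summation. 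Carrying the factor $\log(2x'/x)/\log x'$ unchanged through the induction then gives the stated bound.
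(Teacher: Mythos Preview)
The paper does not prove this lemma at all: it is quoted verbatim as Proposition~4.1 of Granville--Soundararajan, \emph{The spectrum of multiplicative functions} (Ann.\ of Math.\ \textbf{153} (2001), 407--470), and used as a black box. So there is no in-paper argument to compare your proposal to.

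On its own merits, your setup is correct: the convolution identity $f\log=f\ast\Lambda_f$, the formula $S(y)\log y=\int_1^y M(t)\,dt+\sum_{b\le y}\Lambda_f(b)S(y/b)$, the subtraction at $x$ and $x'$, and the reduction of the easy pieces to $O(\log(2x'/x))$ are all fine and are exactly how one begins.

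The gap is the step you yourself flag as the ``main obstacle''. The sentence ``carrying the factor $\log(2x'/x)/\log x'$ unchanged through the induction then gives the stated bound'' is not a proof, and the naive induction does not close. If you feed the inductive hypothesis
\[
|M(x'/b)-M(x/b)|\le C\,\frac{\log(2x'/x)}{\log(x'/b)}\,\exp\Bigl(\sum_{p\le x'/b}\tfrac{1-f(p)}{p}\Bigr)
\]
back into $\sum_{b\le x}\tfrac{\Lambda_f(b)}{b}\bigl(M(x'/b)-M(x/b)\bigr)$, you are forced to take absolute values of the differences, which destroys the signs of $\Lambda_f(p)=f(p)\log p$ that you say must be kept; one is left with a weight $\sum_{p\le x}\tfrac{\log p}{p\log(x'/p)}\asymp\log\log x$, so the constant $C$ is reproduced with a factor $\gg1$ and the induction fails to self-improve. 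The observation that replacing $f(p)$ by $|f(p)|$ gives the wrong exponent is correct, but nothing in the outline explains how to exploit the signs once absolute values have been taken on the $\Delta M$ side.

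What is missing is the device Granville--Soundararajan actually use: one compares the identity for $f$ with the same identity for the constant function $1$ (for which $M\equiv1$), so that the prime weight becomes $(1-f(p))\log p\ge0$ rather than $f(p)\log p$, and then a Gronwall-type iteration of the resulting integral inequality produces the exponential factor $\exp\bigl(\sum_{p\le x'}\tfrac{1-f(p)}{p}\bigr)$ directly. Without that comparison (or an equivalent trick), the renewal inequality you write down does not yield the stated bound.
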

We can now prove Lemma \ref{lemma:diff}.
\begin{proof}
Given $x'\ge x \ge 3$, define $\delta$ by $x'=x^{1+\delta}$ and put
\begin{equation*}
R=\log \left( \frac{\log x'}{\log 2x'/x}\right)^{1/2+\epsilon}.
\end{equation*}
If we assume
\begin{equation*}
\sum_{p\le x'}\frac{1-f(p)}{p} \ge R,
\end{equation*}
we have that Lemma \ref{lemma:diff} follows from Theorem \ref{theo:H-T} and using that from $3 \le x \le x'\le x^{\mathcal{O}(1)}$ we have
\begin{equation*}
\sum_{x\le p\le x'}\frac{1-f(p)}{p}\ll 1.
\end{equation*}
On the other hand if we assume 
\begin{equation*}
\sum_{p\le x'}\frac{1-f(p)}{p} \le R,
\end{equation*}
we have that Lemma \ref{lemma:diff} follows from Lemma \ref{lemma:G_S}.
This concludes the proof.
\end{proof}

\end{document}